\newtheorem{theorem}{Theorem}[section]
\newtheorem{lemma}[theorem]{Lemma}
\newtheorem{corollary}[theorem]{Corollary}
\newtheorem{proposition}[theorem]{Proposition}
\newtheorem{obs}[theorem]{Observation}
 \newtheorem{defi}[theorem]{Definition}
\newenvironment{definition}{\begin{defi}\rm}{\end{defi}}
\newtheorem{exa}[theorem]{Example}
\newenvironment{example}{\begin{exa}\rm}{\end{exa}}
\newtheorem{rem}[theorem]{Remark}
\newenvironment{remark}{\begin{rem}\rm}{\end{rem}}
\newtheorem{rems}[theorem]{Remarks}
\newtheorem{ack}[theorem]{Acknowlegment}
\def\bsq{\blacksquare\medskip}
\def\H{\mathcal H}
\def\N{\mathcal N}
\def\B{\mathcal B}
\def\V{\mathcal V}
\def\U{\mathcal U}
\def\R{\mathcal R}
\def\NN{{\mathbf N}}
\def\ZZ{{\mathbf  Z}}
\def\CCC{{\mathbf  C}}
\def\RRR{{\mathbf R}}
\def\RR+{{\mathbf R}^*}
\def\PP{\mathbf P}
\def\Q_p{{\mathbf Q}_p}
\def\OO{\mathbf O}
\def\UU{\mathbf U}
\def\eps{\varepsilon}
\def\Ga{\Gamma}
\def\ga{\gamma}
\def\la{\lambda}
\def\vfi{\varphi}
\def\ExAut{{\mathrm Aut}_{\rm e}}
\def\Aut{{\mathrm Aut}}
\def\Inn{{\mathrm Inn}}
\def\Ad{{\mathrm Ad}}
\def\Hom{{\mathrm Hom}}
\def\vfi{{\varphi}}
\def\tout{\quad \quad \text{for all}\quad }
\begin{document}

\title[Local rigidity for actions on non commutative $L_p$-spaces] {Local rigidity for actions of Kazhdan groups on non commutative $L_p$-spaces}
\author{Bachir Bekka}

\address{Bachir Bekka \\ IRMAR \\ UMR-CNRS 6625 Universit\'e de  Rennes 1\\
Campus Beaulieu\\ F-35042  Rennes Cedex\\
 France}
 
\email{bachir.bekka@univ-rennes1.fr}
\thanks{{\it 2010 Mathematics Subject Classification:}  22D12, 46L51} 
\thanks{{\it Key words:} Kazhdan property, local rigidity, noncommutative $L_p$-spaces}

\begin{abstract}
Let $\Ga$ be a discrete group  and
$\N$ a finite factor, and assume that both have Kazhdan's Property (T).
For $ p \in[1,+\infty), \, p\neq 2,$ 
 let $\pi: \Ga \to \OO(L_p(\N))$ be a homomorphism 
 to the  group  $\OO(L_p(\N))$ of linear bijective  isometries 
of the $L_p$-space of $\N.$  There are two actions $\pi^l$ and $\pi^r$
 of a finite index subgroup $\Ga^+$ of $\Ga$ by automorphisms of $\N$ associated to $\pi$
 and given by $\pi^l(g)x=(\pi(g) 1)^*\pi(g)(x)$ and $\pi^l(g)x=   \pi(g)(x) (\pi(g) 1)^*$
 for $g\in \Ga^+$ and $x\in \N.$
Assume that  $\pi^l$ and $\pi^r$ are ergodic.
We prove that $\pi$ is locally rigid, that is,
the orbit of $\pi$ under $\OO(L_p(\N))$ is open in
$\Hom (\Ga,\OO(L_p(\N)) ).$ As a corollary, we obtain that,
if moreover $\Ga$ is an ICC group, then the embedding
$g\mapsto \Ad (\la(g))$  is locally rigid in   $\OO(L_p(\N(\Ga))),$
where  $\N(\Ga)$ is the von Neumann algebra generated
by the left regular representation $\la$ of $\Ga$.

\end{abstract}
\maketitle

\section{Introduction}

Let  $\Ga$ be a discrete group and $G$  a topological group.
A group homomorphism $\pi_0:\Ga\to G$ 
is locally rigid if every sufficiently small
deformation of $\pi_0$ is trivial, in the sense that it is
given by conjugation by elements from $G.$ More precisely,
let $\Hom(\Ga, G)$ be the set of all homomorphisms  $\pi: \Gamma\to G$
endowed with the topology of pointwise convergence on $\Gamma$. 
The group $G$ acts on $\Hom(\Ga, G)$ by conjugation:
$${\rm Ad}(g)\pi (\ga)= g\pi(\ga) g^{-1} \tout  g\in G, \ga\in\Ga.$$
We say that $\pi_0$ is  \emph{locally rigid} 
if  its $G$-orbit in $\Hom(\Ga, G)$ is open.

Local rigidity  was proved for the embedding of a  cocompact lattice $\Ga$
in a semisimple real Lie group  $G$ by Calabi, Vesentini, Selberg, and Weil 
 (see Chapter VII in \cite{Raghunathan}). 

Groups with Kazhdan's Property (T) are defined by  a rigidity property of   their unitary group representations  and   play an important role in a large variety of subjects (for an account on Kazhdan's groups, see the monography \cite{BHV}).
It is natural  to  study  local rigidity for homomorphisms from such groups 
to  various topological groups $G.$   
As an example,  it was shown in  \cite[Theorem1]{Rapinchuk}
that,  if $\Gamma$ is a (discrete) Kazhdan group, then  
every  unitary representation $\Gamma\to U(n)$  is locally rigid in $GL_n(\CCC).$
In recent years, there has been an increasing interest in  local rigidity for homomorphisms 
with ``infinite dimensional" groups as targets.
For instance, a striking result in \cite{FisherMargulis} shows that every action of a  Kazhdan group by  isometries on a compact
Riemannian manifold is locally rigid in its group of diffeomorphisms.
For an overview   on local rigidity for
 actions of groups  on various manifolds, see \cite{Spatzier} and \cite{Fisher}.

In this paper, we study  local rigidity
for homomorphisms of discrete Kazhdan groups into 
the group of linear isometries of  non-commutative $L_p$-spaces, that is, the $L_p$-spaces
 associated to a von Neumann algebra.
Recently,   Property (T) has been studied  in the
 framework  of  group actions by  isometries on  Banach spaces 
 and more specifically on   $L_p(X, \mu)$ for a measure space $(X,\mu)$; see \cite{BFGM}. 
 Some of the results from  \cite{BFGM} were
 extended in \cite{Olivier} to non-commutative $L_p$-spaces. 

Recall that a von Neumann algebra $\N$  is  said to be finite
if there exists a faithful  normal finite trace  $\tau$ on $\N.$ 
Let $1\leq p <\infty$; the non-commutative $L_p$-space $L_p(\N)$
 is the completion of $\N$ with respect to the norm  defined   by $\Vert x\Vert_p= (\tau (|x|^{p}))^{1/p}$
for $x\in \N.$ For a  survey on these spaces, see \cite{Pisier-Xu}.

The von Neumann algebra $\N$ is a factor if the centre of $\N$ is reduced to the scalar operators.
When $\N$ is a finite factor, then either $\N$
is finite dimensional, in which case $\N$ is isomorphic to a matrix algebra
$M_n(\CCC)$ equipped  with the usual (normalized) trace, or $\N$
is a so-called type $II_1$ factor.

An important class of examples of type $II_1$ factors is given by 
ICC groups. Recall that the   group $\Ga$  is ICC
if its conjugacy classes, except  $\{e\}$, are infinite.
In this case,  the von Neumann algebra $\N(\Ga)$ of $\Ga$ is a (finite) factor.
Recall that $\N(\Ga)$ 
is the von Neumann algebra generated by the left regular representation
$\lambda$ of $\Ga$ on $\ell_2(\Ga);$ thus,
$\N(\Ga)$ is the closure for the strong operator topology
of the linear span of $\{\la(g)\ :\ g\in\Ga\}$
in the algebra $\B(\ell_2(\Ga)).$

A notion of Kazhdan's property (T) for von Neumann algebras
was defined in  \cite{CoJo} (see Section~\ref{SS:Kazhdan} below)
and  it was shown there that, for an ICC group $\Ga$, the factor $\N(\Ga)$ has Property (T) if and only if the group $\Ga$
has Kazhdan's property (T).

Let $1\leq p <\infty$  and $\N$ a finite factor. The orthogonal  group $\OO(L_p(\N))$ of $L_p(\N),$ that is, the  group
of bijective linear isometries of $L_p(\N)$, is a topological group when  endowed 
with the strong  operator topology (see Section~\ref{SS:Isometries} below).
Observe that every automorphism  or anti-automorphism $\theta$ of $\N$  extends to a unique isometry of $L_p(\N)$,
since $L_p(\N)$ contains $\N$ as dense subspace and since $\theta$ preserves the trace on $\N.$
In this way, we identify   the \emph{extended automorphism group} $\ExAut (\N),$ 
that is the group of automorphisms or anti-automorphisms of $\N,$ 
with a subgroup of $\OO(L_p(\N))$. 

Let $p\neq 2.$ 
 The group  $\OO(L_p(\N))$ for $p\neq 2$ was described in  \cite{Yeadon} (see  Theorem~\ref{Yeadon} below).
It follows from this description that
$\OO(L_p(\N))$ contains  a	subgroup  $\OO^+(L_p(\N))$
of  index at most 2 such that, for every $U$ in $\OO^+(L_p(\N)),$
 the mappings
$$
U^{l}:x\mapsto U(1)^*U(x) \, \,\,  \text{and}\, \,\,  U^{r}:x\mapsto  U(x)U(1)^*
$$ 
are automorphisms of $\N.$

If   $\pi: \Ga \to \OO(L_p(\N))$
  is  a group homomorphism,  we obtain in this way 
 two  actions   of  a subgroup $\Ga^+$   of  index at most 2  by automorphisms of $\N$,
 given  by  homomorphisms $$\pi^l: \Ga^+\to \Aut (\N) \,\, \text{and}\,\, \pi^r: \Ga^+\to \ExAut (\N);$$
  we call $\pi^l$ and $\pi^r$ the actions of $\Ga^+$  by automorphisms associated to $\pi$
  (see Section~\ref{SS:Rep}).
 Recall that an action $\theta : \Ga\to \Aut(\N)$ of a group $\Ga$ by automorphisms on a von Neumann algebra
 $\N$ is \emph{ergodic} if the fixed point algebra 
 $$\N^\Ga=\{ x\in \N : \theta_g(x)=x \quad \text{for all} \quad g\in \Ga\}$$
  consists only of the scalar multiples of $1.$

Here is our main result.
\begin{theorem}
\label{Theo1} 
Let $\Ga$ be a discrete group  and $\N$ a finite factor, and assume that both have Property (T). For $p\in [1, +\infty),\,  p\neq 2,$ let
  $\pi: \Ga \to \OO(L_p(\N))$ be  a homomorphism from $\Ga$ to the group of linear bijective isometries
  of $L_p(\N).$  Assume that
  the  associated actions $\pi^l$ and  $\pi^r$  of $\Ga^+$ by automorphisms on $\N$ are both ergodic.
 Then $\pi$ is locally rigid.
 \end{theorem}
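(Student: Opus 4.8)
The plan is to exploit Yeadon's normal form to reduce $\pi$ to the data of an action by automorphisms together with a unitary-valued cocycle, and then to kill a nearby deformation by two successive conjugations, each justified by a cocycle-rigidity argument. Since a finite-index subgroup of a Kazhdan group is again Kazhdan, I would first pass to $\Ga^+$, where Yeadon's theorem (Theorem~\ref{Yeadon}) gives the clean description
$$\pi(g)(x)=c(g)\,\pi^l(g)(x),\qquad c(g):=\pi(g)(1)\in \UU(\N),\quad \pi^l(g)\in\Aut(\N),$$
using that in a finite factor the positive part of Yeadon's decomposition is affiliated to the centre, hence the scalar $1$. A direct computation shows that $c$ is a $1$-cocycle for $\pi^l$, namely $c(gh)=c(g)\pi^l(g)(c(h))$, so that $\pi|_{\Ga^+}$ is completely encoded by the pair $(\pi^l,c)$. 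The first task is then a continuity lemma: a homomorphism $\tilde\pi$ close to $\pi$ on a finite generating set yields data $(\tilde\pi^l,\tilde c)$ close to $(\pi^l,c)$; this follows from the fact that $\tilde\pi(g)(1)=\tilde c(g)$ depends continuously on $\tilde\pi$ and that Yeadon's decomposition is continuous.

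Next I would correct the automorphism part. For each generator $g$ the automorphism $\tilde\pi^l(g)\pi^l(g)^{-1}$ is close to the identity, so by Property (T) of the factor $\N$ --- under which $\Inn(\N)$ is open in $\Aut(\N)$, equivalently $\Out(\N)$ is discrete --- it is inner, implemented by a unitary $w_g$ close to $1$. The homomorphism property forces $w$ to be a cocycle for $\pi^l$ up to a scalar $2$-cocycle, i.e. a genuine cocycle with values in the projective unitary group. Because $\pi^l$ is \emph{ergodic}, the associated Koopman representation on $L_2(\N)\ominus\CCC 1$ has no nonzero invariant vectors, and Property (T) of $\Ga^+$ then forces this small cocycle to be a coboundary, implemented by a unitary $v$ close to $1$. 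Conjugating $\tilde\pi$ by the extended automorphism $\Ad(v)\in\OO(L_p(\N))$ changes $(\tilde\pi^l,\tilde c)$ into $(\Ad(v)\tilde\pi^l\Ad(v)^{-1},\,v\tilde c\,v^*)$, and the choice of $v$ arranges $\tilde\pi^l=\pi^l$; thus after this first conjugation only the cocycles differ.

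It remains to correct the cocycle. With $\tilde\pi^l=\pi^l$, set $\beta(g):=\tilde c(g)c(g)^*$, a unitary close to $1$ on generators; a short computation gives the relation $\beta(gh)=\tilde\pi^r(g)(\beta(h))\,\beta(g)$, where $\tilde\pi^r(g)=\Ad(\tilde c(g))\circ\pi^l(g)$ is the right action associated to $\tilde\pi$. Thus $\beta$ is a unitary-valued cocycle for $\tilde\pi^r$, which is close to $\pi^r$ and hence, by \emph{ergodicity} of $\pi^r$, has no invariant vectors on $L_2(\N)\ominus\CCC 1$. Property (T) of $\Ga^+$ again untwists $\beta$ into a coboundary via a unitary $v'$ close to $1$. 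Conjugating $\tilde\pi$ by the isometry $x\mapsto v'x$ of $L_p(\N)$ --- which lies in $\OO(L_p(\N))$, fixes the automorphism part, and replaces $c$ by $v'c\,\pi^l(\cdot)(v')^*$ --- brings $\tilde c$ to $c$, so that $\tilde\pi=\pi$ on $\Ga^+$. Since both conjugating isometries are close to the identity and $\Ga^+$ has finite index in $\Ga$, this yields that the $\OO(L_p(\N))$-orbit of $\pi$ is open, i.e. $\pi$ is locally rigid.

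The main obstacle is the pair of cocycle-untwisting steps: one must promote a multiplicative, $\UU(\N)$-valued cocycle that is merely \emph{close to trivial} into an exact coboundary whose primitive is again close to $1$. The mechanism is the spectral gap furnished by Property (T) of $\Ga^+$ once ergodicity guarantees the absence of invariant vectors in $L_2(\N)\ominus\CCC 1$; a fixed-point/averaging argument then produces the correcting unitary. Two points require care: the scalar $2$-cocycle ambiguity in the automorphism step, which is handled by working in the projective unitary group, and the quantitative control ensuring the correcting unitaries stay near $1$, which is exactly what keeps the conjugations inside an arbitrarily small neighbourhood and hence delivers openness rather than mere conjugacy.
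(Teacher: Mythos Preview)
Your strategy is essentially the paper's: decompose $\pi|_{\Ga^+}$ via Yeadon into $(\pi^l,c)$, use Property~(T) of $\N$ to write $\tilde\pi^l(s)\pi^l(s)^{-1}=\Ad(w_s)$ with $w_s$ close to $1$, recognize $w$ as a projective $1$-cocycle for $\pi^l$, untwist it using ergodicity of $\pi^l$ and the Nicoara--Popa--Sasyk rigidity for projective almost-invariant vectors, and then untwist the remaining genuine cocycle using ergodicity of $\pi^r$. The paper additionally transfers the problem to $L_2(\N)$ via the Mazur map before starting, but that is only for the sake of uniform $L_2$-estimates and does not change the argument.

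There is one genuine gap. Your final sentence asserts that conjugacy on $\Ga^+$ together with ``$\Ga^+$ has finite index in $\Ga$'' yields local rigidity of $\pi$ on all of $\Ga$. This does not follow formally. After your two conjugations you have $U\tilde\pi U^{-1}=\pi$ on $\Ga^+$ with $U$ close to $1$, so for a fixed $s_0\in\Ga\setminus\Ga^+$ the element $\varphi:=(U\tilde\pi U^{-1})(s_0)\pi(s_0)^{-1}$ is close to $1$ and, by a short computation, commutes with $\pi(\Ga^+)$. But nothing you have said forces $\varphi=1$; one needs to show that the centralizer of $\pi(\Ga^+)$ in $\OO^+(L_p(\N))$ is trivial near the identity. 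The paper spends a separate subsection on exactly this: writing $\varphi=(c,\Ad(d))$, it uses ergodicity of $\pi^l$ (plus a small-character argument) to force $d$ scalar, then ergodicity of $\pi^r$ to force $c$ scalar, and finally $s_0^2\in\Ga^+$ together with closeness to pin down the scalars as $1$. Without this extra step the passage from $\Ga^+$ to $\Ga$ is unjustified, and indeed it is here that the ergodicity of \emph{both} $\pi^l$ and $\pi^r$ is needed a second time.
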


Thus, there  exists a neighbourhood $\V$ of $\pi$ such that every $\rho$ in $\V$ is conjugate to $\pi$ by some $U$ in  $\OO(L_p(\N)).$
In fact, we will determine explicitly, in terms of 
Kazhdan pairs for $\Ga$  and $\N$,  such a  neighbourhood $\V$ for which $U$ can be chosen to be close
the identity (see Remark~\ref{Rem-Quantitive} below).

As we will see in Section~\ref{S:Comments},   the various assumptions made in the statement of Theorem~\ref{Theo1}
are necessary in one form or another. 

Since $\N(\Ga)$ is a factor when $\Ga$ is an ICC group, the action $g \mapsto \Ad(\la(g))$ of $\Ga$ 
by automorphisms of $\N(\Ga)$ is  ergodic; so,  the following  corollary is an immediate consequence of the previous theorem.
\begin{corollary}
\label{Cor1} 
Let $\Ga$ be an ICC group with  Kazhdan's Property (T).
The embedding $g \mapsto \Ad(\la(g))$  of  $\Ga$ in  $\OO(L_p(\N(\Ga)))$ 
is locally rigid, for $p\in[1, +\infty),\,  p\neq 2.$ 
\end{corollary}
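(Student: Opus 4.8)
The plan is to prove Corollary~\ref{Cor1} by exhibiting it as a special instance of Theorem~\ref{Theo1}. The corollary concerns the homomorphism $\pi_0 : \Ga \to \OO(L_p(\N(\Ga)))$ defined by $\pi_0(g) = \Ad(\la(g))$, where $\N = \N(\Ga)$. To invoke the theorem I must verify each of its three hypotheses: that both $\Ga$ and $\N$ have Property (T), and that the two associated actions $\pi_0^l$ and $\pi_0^r$ of $\Ga^+$ are ergodic.

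\medskip
\noindent First I would settle the Property (T) assumptions. The group $\Ga$ has Property (T) by hypothesis of the corollary. For the factor, I would invoke the result of \cite{CoJo} quoted in the introduction: for an ICC group $\Ga$, the factor $\N(\Ga)$ has Property (T) if and only if $\Ga$ does. Since $\Ga$ is assumed ICC with Property (T), this yields that $\N(\Ga)$ is a finite factor (ICC guarantees it is a factor, as recalled in the excerpt) with Property (T). Thus both Property (T) hypotheses of Theorem~\ref{Theo1} are met.

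\medskip
\noindent The substantive point, and the one I expect to require the most care, is identifying the associated actions $\pi_0^l, \pi_0^r$ and checking ergodicity. For $\pi_0(g) = \Ad(\la(g))$ one computes $\pi_0(g)(1) = \Ad(\la(g))(1) = \la(g) 1 \la(g)^* = 1$, so the ``cocycle'' element $\pi_0(g)(1)$ is trivial; consequently the whole group $\Ga$ lies in $\OO^+(L_p(\N(\Ga)))$, i.e.\ $\Ga^+ = \Ga$, and both $\pi_0^l$ and $\pi_0^r$ reduce to the automorphism $x \mapsto \Ad(\la(g))(x)$ itself. Ergodicity then amounts to showing that the fixed-point algebra $\N(\Ga)^{\Ga} = \{x \in \N(\Ga) : \la(g) x \la(g)^* = x \text{ for all } g \in \Ga\}$ consists only of scalars. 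But an element fixed by every inner automorphism $\Ad(\la(g))$ commutes with every $\la(g)$, hence lies in the commutant of $\{\la(g) : g\in\Ga\}$ intersected with $\N(\Ga)$, which is the centre of the factor $\N(\Ga)$, and this is $\CCC 1$ precisely because $\N(\Ga)$ is a factor. Hence both associated actions are ergodic. This is the step where one must be attentive to the distinction between fixed points of the action on $\N(\Ga)$ and elements of the centre, but the factoriality of $\N(\Ga)$ (guaranteed by the ICC condition) makes the two coincide.

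\medskip
\noindent With all three hypotheses of Theorem~\ref{Theo1} verified, I would conclude directly that $\pi_0 = (g \mapsto \Ad(\la(g)))$ is locally rigid in $\OO(L_p(\N(\Ga)))$ for every $p \in [1, +\infty)$ with $p \neq 2$, which is exactly the assertion of the corollary. The proof is therefore a matter of assembling known ingredients rather than new analysis: the whole weight rests on Theorem~\ref{Theo1}, with the only genuine verification being the ergodicity reduction above, which is immediate from factoriality.
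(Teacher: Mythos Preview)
Your proposal is correct and follows precisely the same approach as the paper: the paper states just before the corollary that, since $\N(\Ga)$ is a factor when $\Ga$ is ICC, the action $g\mapsto\Ad(\la(g))$ is ergodic, and declares the corollary an immediate consequence of Theorem~\ref{Theo1}. You have simply spelled out the details (the Connes--Jones result for Property~(T) of $\N(\Ga)$, the identification of the Yeadon decomposition so that $\pi_0^l=\pi_0^r=\Ad\circ\la$, and the reduction of ergodicity to factoriality) that the paper leaves implicit.
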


\begin{remark}
\label{Rem-LocRigid}
There is another natural action of  $\Ga$ by  isometries on $L_p(\N(\Ga))$: it is 
given  by the embedding  
$$\pi_0: \Ga \to \OO(L_p(\N(\Ga))), g \mapsto \la(g),$$  where  $ \la(g)$ denotes
the extension from $\N(\Ga)$  to $L_p(\N(\Ga))$ of  multiplication from the left by the unitary $\la(g)$.
As will be seen below (Example \ref{Exa2}),  $\pi_0$ 
may fail to be locally rigid when $\Ga$ is an ICC  group with Property (T). 
In contrast, it can be shown that, if we view
$\pi_0$ as a homomorphism  in the unitary group $\UU(\N(\Ga))$ of $\N(\Ga),$
then $\pi_0$ is locally rigid.
\end{remark}

Apart from Yeadon's description of the group of 
 isometries of $L_p(\N)$ for $p\neq 2,$
 the  proof of Theorem~\ref{Theo1} 
depends on the following three ingredients:   the first one (see Proposition~\ref{Pro-Mazur} below) is that
$\OO(L_p(\N))$ is isomorphic, as \emph{topological} group, to an appropriate subgroup of the 
group of isometries of the Hilbert space $L_2(\N);$  the second ingredient
is the fact  (see \cite{CoJo}) that the group of  outer automorphisms of a factor
with Property (T) is  discrete; the third ingredient is an extension of  a result from \cite{Kawa} 
showing  that certain   $1$-cohomology classes  
associated to actions of a Kazhdan group by automorphisms of  a finite factor
are open  (see Proposition~\ref{Pro-Kawa}). 
For this,  we use in a crucial way a rigidity property of  projective unitary representations
of Kazhdan groups from \cite{NPS}.

This paper is organized as follows. In Section~\ref{S:Preliminaries},
we collect the ingredients necessary for the proof of our main result.
 Section~\ref{S:ProofTheo1} is devoted to the proof of Theorem~\ref{Theo1}. 
In Section~\ref{S:Comments},  we present counter-examples in relation with  the various assumptions 
made in the statement of Theorem~\ref{Theo1}.

\section{Preliminaries}
\label{S:Preliminaries}
Let $\N$ be a \emph{finite factor}, fixed throughout this section. 

\subsection{The group of isometries of $L_p(\N)$}
\label{SS:Isometries}

The following result is a corollary of Yeadon's description  of the linear (not necessarily surjective)  isometries of the non-commutative $L_p$ space of a 
semi-finite von Neumann  algebra for $p\neq 2$ (see \cite[Theorem 2]{Yeadon}).
For an extension of Yeadon's result to  arbitrary (not necessarily semi-finite)  Neumann algebras, see \cite{Sherman}.
\begin{theorem}
\label{Yeadon}
\textbf{{\rm(\cite{Yeadon})}}
Let $1\leq p<\infty$ and $p\neq2$. 
A  mapping $U:L_p(\N)\to L_p(\N)$ is 
a linear surjective isometry if and only if there exists a unique pair $(u,\theta)$
consisting of  a unitary $u\in \N$ and 
an  automorphism or an anti-automorphism $\theta$ of $\N$ such that 
$$U(x)=u\theta(x) \tout x\in \N.$$
\end{theorem}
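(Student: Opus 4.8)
The plan is to deduce the statement from Yeadon's general description of the linear (not necessarily surjective) isometries of a noncommutative $L_p$-space over a semifinite von Neumann algebra \cite{Yeadon}, specialized to our surjective setting and to the fact that $\N$ is a \emph{finite factor}. Recall that Yeadon's theorem associates to every linear isometry $U:L_p(\N)\to L_p(\N)$ (with $1\le p<\infty$, $p\neq 2$) a uniquely determined triple $(w,B,J)$, where $J:\N\to\N$ is a normal Jordan $*$-homomorphism, $w\in\N$ is a partial isometry with $w^*w=J(1)$, and $B$ is a positive operator affiliated with the centre of $\N$ and commuting with $J(\N)$, subject to a trace-compatibility condition ensuring that $U$ is isometric, such that $U(x)=wB^{1/p}J(x)$ for all $x\in\N$. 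The first step is simply to invoke this decomposition; the work lies in showing that, under our hypotheses, it collapses to the asserted form $U(x)=u\theta(x)$.

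Second, I would use factoriality and surjectivity to trim the three data. Since the centre of $\N$ is reduced to the scalars, $B$ is a nonnegative scalar $\lambda\cdot 1$. As $U$ is bijective and isometric, $J$ is injective and surjective; a surjective Jordan $*$-homomorphism of the factor $\N$ must satisfy $J(1)=1$, for otherwise its range would lie in a proper corner of $\N$. Hence $w^*w=J(1)=1$ and, by surjectivity of $U$, also $ww^*=1$, so $w=:u$ is a unitary. Finally, every automorphism or anti-automorphism of a finite factor preserves the unique normalized trace, so $\tau\circ J=\tau$; feeding this into the trace-compatibility condition and evaluating at $1$ forces $\lambda=1$, i.e. $B=1$. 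Thus $U(x)=uJ(x)$, where $J$ is now a unital normal Jordan $*$-automorphism of $\N$.

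Third, I would appeal to the classical structure theorem (Kadison, St\o rmer) that a Jordan $*$-automorphism of a factor is either a $*$-automorphism or a $*$-anti-automorphism: the central projection splitting a Jordan isomorphism into its multiplicative and anti-multiplicative parts is $0$ or $1$ when the centre is trivial. Setting $\theta:=J\in\ExAut(\N)$ yields the required representation $U(x)=u\theta(x)$.

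It remains to verify the converse and uniqueness, both routine. Any $U$ of the form $U(x)=u\theta(x)$ with $u$ a unitary and $\theta\in\ExAut(\N)$ is a bijective linear map with inverse $x\mapsto\theta^{-1}(u^*x)$; since $|u\theta(x)|=|\theta(x)|$ and $\theta$ preserves the trace, one has $\norm{U(x)}_p=\norm{\theta(x)}_p=\norm{x}_p$, so $U$ extends to a surjective isometry of $L_p(\N)$. For uniqueness, $u\theta(x)=u'\theta'(x)$ for all $x$ gives $u=u'$ at $x=1$ and then $\theta=\theta'$. The main obstacle is the second step: correctly reading off from Yeadon's trace-compatibility condition, together with the finiteness and factoriality of $\N$, that the central density $B$ reduces to $1$ and that the partial isometry $w$ upgrades to a unitary. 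The remaining ingredients are either direct verifications or the two cited structure theorems.
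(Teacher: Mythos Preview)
Your proposal is correct and follows essentially the same route as the paper: invoke Yeadon's general decomposition $U(x)=wBJ(x)$, then use factoriality of $\N$ (and surjectivity of $U$) to reduce $B$ to $1$, upgrade $w$ to a unitary, and identify the Jordan isomorphism $J$ with an automorphism or anti-automorphism via the classical Kadison--St{\o}rmer splitting (the paper cites this as \cite[Proposition 3.2.2]{BraRob}). You simply spell out in more detail the steps the paper compresses into a single sentence, and you add the routine converse and uniqueness verifications, which the paper omits.
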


As this result is not explicitly stated  in  \cite{Yeadon}, we indicate how 
it follows from there. Since $U$ is surjective,  by  \cite[Theorem 2]{Yeadon}, there exist
 a normal Jordan isomorphism $J: \N\to \N$, a unitary $u\in\N$, and a positive self-adjoint operator $B$ affiliated with $\N$
 such that $U(x)=uBJ(x)$ for all $x\in\N.$ Since $\N$ is factor,  $J$  is either an automorphism  or an anti-automorphism (see \cite[Proposition 3.2.2]{BraRob}) and $B=1.$

 The group $\OO(L_p(\N))$ of linear bijective isometries of $L_p(\N)$  is a topological group 
 when equipped  with the strong  operator topology; this is the topology for which  
 a fundamental system family of neighbourhoods of $U$ in $\OO(L_p(\N)) $ is given
by  subsets of the form 
$$\{V\in \OO(L_p(\N)) \, : \, \Vert V(x_i)- U(x_i)\Vert_p \leq \eps \}$$ for $x_1,\dots, x_n\in L_p(\N)$
and $\eps>0$. 

We identify the extended automorphism group $\ExAut (\N)$,
the automorphism group ${\Aut}(\N)$ and 
the unitary group $\UU(\N)$ of $\N$ with  subgroups of $\OO(L_p(\N))$, 
 endowed with  the topology induced by that of   $\OO(L_p(\N)).$
As is easy to show, the topology on $\UU(\N)$ coincides with the topology induced
 by its embedding in $L_p(\N)$ given by $u\mapsto u1.$
 
 For $U\in \OO(L_p(\N))$, we will often write  $U= (u, \theta)$ for $u$ in  $\UU(\N)$
and $\theta$ in ${\ExAut}(\N)$ 
and refer to the pair $(u, \theta)$ as the \emph{Yeadon decomposition} of $U$.

The set of isometries $U$ with Yeadon decomposition $(u, \theta)$ for which $\theta$
is an  automorphism  of $\N$ is a closed subgroup of 
 index at most 2 in $\OO(L_p(\N))$ and will be denoted by $\OO^+(L_p(\N)).$
 
Observe that $\UU(\N)$ is normal in  $\OO^+(L_p(\N))$  but, in general, 
 not normal in $\OO(L_p(\N)).$

It follows from Yeadon's result and from  \cite[ Theorem~2]{Junge} that 
the subgroup $\OO^+(L_p(\N))$ can be intrinsically characterized 
inside $\OO(L_p(\N)$ as the subgroup of the complete
isometries (or as the  subgroup of $2$-isometries)
of $L_p(\N)$ in the sense of operator spaces,
 that is, the isometries $U$ of  $L_p(\N)$ such that 
${\rm id}\otimes U$ is an isometry of $L_p(M_n(\CCC)\otimes \N)$ for every 
$n\in \NN$ (or such that  ${\rm id}\otimes U$ is an isometry of $L_p(M_2(\CCC)\otimes \N)$)
 It should be mentioned that  completely isometric or, more generally, completely bounded mappings
are natural objects to study in the context of operator algebras (see \cite{Paulsen}).

\begin{corollary}
\label{Cor-Yeadon}
For $1\leq p<\infty$ and $p\neq2$, the group $\OO^+(L_p(\N))$ 
is isomorphic as topological  group to the topological semi-direct product $\UU(\N)\rtimes {\Aut}(\N),$ 
given by the natural action of  ${\Aut}(\N)$ on  $\UU(\N).$
\end{corollary}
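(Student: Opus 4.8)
The plan is to show that the map
$\Phi\colon \OO^+(L_p(\N))\to \UU(\N)\rtimes \Aut(\N)$
sending an isometry $U$ to its Yeadon decomposition $(u,\theta)$ is an isomorphism of topological groups. By Theorem~\ref{Yeadon} and the definition of $\OO^+(L_p(\N))$, the assignment $U\mapsto(u,\theta)$ is a well-defined bijection onto $\UU(\N)\times\Aut(\N)$: uniqueness of the pair gives injectivity, and every $(u,\theta)$ with $\theta\in\Aut(\N)$ defines an element of $\OO^+(L_p(\N))$ via $x\mapsto u\theta(x)$. It then remains to check that $\Phi$ respects the group laws and is a homeomorphism.

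For the algebraic part I would simply compute the composition. If $U=(u,\theta)$ and $V=(v,\sigma)$, then for $x\in\N$
$$UV(x)=U(v\sigma(x))=u\theta(v\sigma(x))=u\,\theta(v)\,(\theta\sigma)(x),$$
so the Yeadon decomposition of $UV$ is $(u\,\theta(v),\ \theta\sigma)$. This is precisely the multiplication rule of the semidirect product $\UU(\N)\rtimes\Aut(\N)$ for the natural action $\theta\cdot v=\theta(v)$ of $\Aut(\N)$ on $\UU(\N)$, so $\Phi$ is a group isomorphism.

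For the topological part the facts I would use repeatedly are that both left multiplication by a unitary and the adjoint $a\mapsto a^*$ are $L_p$-isometries (because $|uy|=|y|$ and $\tau(|a^*|^p)=\tau(|a|^p)$ by traciality), together with the noncommutative H\"older inequality $\norm{ay}_p\le \norm{a}_p\,\norm{y}_\infty$ for $a\in L_p(\N)$ and $y\in\N$. To see that $\Phi$ is continuous, note first that $u=U(1)$ since $\theta(1)=1$, so $U_n\to U$ strongly forces $\norm{u_n-u}_p\to 0$; then from $\theta_n(x)=u_n^*U_n(x)$, the isometry of $u_n^*$, and (for $x\in\N$, where $U(x)=u\theta(x)\in\N$)
$$\norm{\theta_n(x)-\theta(x)}_p\le \norm{U_n(x)-U(x)}_p+\norm{u_n^*-u^*}_p\,\norm{U(x)}_\infty\to 0,$$
one obtains $\theta_n\to\theta$ in the topology of $\Aut(\N)$. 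For the continuity of $\Phi^{-1}$, given $(u_n,\theta_n)\to(u,\theta)$ I would estimate, for $x\in\N$,
$$\norm{u_n\theta_n(x)-u\theta(x)}_p\le \norm{\theta_n(x)-\theta(x)}_p+\norm{u_n-u}_p\,\norm{\theta(x)}_\infty\to 0,$$
and then pass from $\N$ to all of $L_p(\N)$ using the density of $\N$ and the fact that the $U_n$ are uniformly bounded, being isometries.

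The only genuinely delicate point is this topological equivalence, and specifically the passage between the weak-looking datum $\norm{u_n-u}_p\to 0$ (which is all that the embedding $u\mapsto u1$ records about the unitary part) and the stronger control of $\norm{(u_n-u)\,y}_p$ needed to handle the products $u_n\theta_n(x)$. The H\"older inequality, together with the boundedness $\norm{y}_\infty<\infty$ for $y\in\N$, is exactly what bridges this gap on the dense subspace $\N$, after which the density of $\N$ in $L_p(\N)$ and the uniform norm bound on isometries finish the argument.
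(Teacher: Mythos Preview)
Your proof is correct and follows essentially the same line as the paper's: use Yeadon's theorem for the algebraic isomorphism, evaluate at $1$ to get continuity of the $\UU(\N)$-component, and then recover continuity of the $\Aut(\N)$-component from $\theta(x)=u^*U(x)$. The only cosmetic difference is in the treatment of the inverse map: the paper observes (implicitly) that $\Phi^{-1}(u,\theta)=u\cdot\theta$ is just the restriction of the group multiplication of $\OO^+(L_p(\N))$ to the product of two subgroups carrying the subspace topology, hence automatically continuous; you instead write out the H\"older estimate on $\N$ and then pass to $L_p(\N)$ by density and uniform boundedness. Both arguments are valid and amount to the same thing.
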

\begin{proof} 
The fact that  $\OO^+(L_p(\N))$ 
is isomorphic as  abstract group to  $\UU(\N)\rtimes {\Aut}(\N)$ 
is a consequence of Yeadon's theorem. Moreover, evaluation at $1\in\N$ shows  that the projection
$$\OO^+(L_p(\N))\cong \UU(\N)\rtimes {\Aut}(\N)\to \UU(\N),\qquad  (u, \theta)\mapsto u$$
 is continuous. Hence, the projection  $\OO^+(L_p(\N))\to  {\Aut}(\N)$
 is also continuous; so, $\OO^+(L_p(\N))$ and $\UU(\N)\rtimes {\Aut}(\N)$
 are isomorphic as topological groups.
 $\bsq$
\end{proof} 


Every $U=(u, \theta)$ in $ \OO(L_p(\N))$ defines, for every $1\leq q <\infty,$
a  linear bijective isometry of $L_q(\N)$ by the same formula:
$U(x)=u\theta(x)$ for all $x$ in the dense subspace $\N$ of $ L_q(\N).$ 
One obtains in this way a mapping 
$$\Phi_{p,q}:  \OO(L_p(\N))\to  \OO(L_q(\N)).$$
For $q\neq 2,$ this mapping is of course surjective;
this is not the case for $q=2$ (if $\N$ is infinite dimensional)
and we define the $\N$-unitary groups  $\OO_{\N}(L_2(\N))$ and  $\OO^+_{\N}(L_2(\N))$  of the Hilbert  space $L_2(\N)$ 
to be the images of   $\OO(L_p(\N))$ and $\OO^+(L_p(\N))$
under $\Phi_{p,2}$;  thus,
$\OO_{\N}(L_2(\N))$  (respectively $\OO^+_{\N}(L_2(\N))$)  is the group of unitary operators $U$
of $L_2(\N)$  which have a Yeadon decomposition $U=(u, \theta)$
for   $u\in \UU(\N)$ and $\theta\in  {\ExAut}(\N)$ (respectively $\theta\in  {\Aut}(\N)$).

The next proposition will allow us to transfer representations in $\OO(L_p(\N))$ 
to representations in the $\N$-unitary group $\OO_{\N}(L_2(\N))$ of  $L_2(\N).$ Its proof
uses in a crucial way properties of the  \emph{Mazur map}, which  
is the (non linear) mapping $M_{p,q}: L_p(\N)\to L_q(\N)$  defined by  
$$M_{p,q}(x) = u|x|^{\frac{p}{q}}$$
 for 
$x\in L_p(\N)$ with polar decomposition $x=u|x|.$

\begin{proposition}
\label{Pro-Mazur}
Let $1\leq p, q<\infty$ and $p, q\neq2$. 
The mappings 
\begin{align*}
 &\Phi_{2,p}:  \OO_{\N}(L_2(\N))\to \OO(L_p(\N)),  \Phi_{p,2}:  \OO(L_p(\N))\to  \OO_{\N}(L_2(\N)),\\
& \text{and}\qquad\Phi_{p,q}:  \OO(L_p(\N))\to \OO(L_q(\N)) 
\end{align*}
are continuous.  In particular, the groups
$\OO(L_p(\N))$ for $p\neq 2$ and $\OO_{\N}(L_2(\N))$  are mutually isomorphic 
as topological groups. 

\end{proposition}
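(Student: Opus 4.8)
The plan is to observe that the three maps $\Phi_{2,p}$, $\Phi_{p,2}$ and $\Phi_{p,q}$ are all of the same nature: each one leaves the Yeadon data $(u,\theta)$ unchanged and merely reinterprets the operator $x\mapsto u\theta(x)$ on a different space $L_r(\N)$. Since the composition of two operators with Yeadon data $(u_1,\theta_1)$ and $(u_2,\theta_2)$ again has Yeadon data $(u_1\theta_1(u_2),\theta_1\theta_2)$, a description \emph{independent of} $r$, each of these maps is a group homomorphism, and $\Phi_{q,p}$ is a two-sided inverse of $\Phi_{p,q}$ (and $\Phi_{2,p}$ inverts $\Phi_{p,2}$ on $\OO_{\N}(L_2(\N))$). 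Thus, once all of them are shown to be continuous, they are mutually inverse homeomorphisms which are group isomorphisms, so the groups $\OO(L_p(\N))$ for $p\neq 2$ and $\OO_{\N}(L_2(\N))$ are isomorphic as topological groups. The whole weight of the proof therefore rests on continuity, and for this the essential tool is the Mazur map.

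First I would establish the intertwining relation
\[
M_{p,q}\circ U \;=\; \Phi_{p,q}(U)\circ M_{p,q} \qquad (U\in\OO(L_p(\N))).
\]
It suffices to verify this on the dense subspace $\N$, from which it extends to all of $L_p(\N)$ by density together with the continuity of $M_{p,q}$ (see below). Writing $U=(u,\theta)$ and using the polar decomposition $x=v\abs{x}$ of $x\in\N$, one computes both sides directly. The key point is that $\theta$, whether an automorphism or an anti-automorphism, is a Jordan $*$-isomorphism, hence commutes with the continuous functional calculus of positive elements and transforms the polar decomposition of $x$ in a controlled way; since $M_{p,q}$ only raises the modulus to the power $p/q$ while preserving the phase, in the automorphism case both sides reduce to $u\,\theta(v)\,\theta(\abs{x})^{p/q}$. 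The anti-automorphism case is handled in the same spirit, using in addition the identity $\abs{x^*}^{s}=v\abs{x}^{s}v^{*}$ to track the phase, and is otherwise routine support bookkeeping.

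Next I would invoke the fundamental analytic property of the Mazur map on noncommutative $L_p$-spaces: for fixed $p,q$ the map $M_{p,q}$ is a bijection with inverse $M_{q,p}$, and it is uniformly continuous (in fact Hölder) on every bounded subset of $L_p(\N)$ (cf. \cite{Olivier}). Granting this, continuity of $\Phi_{p,q}$ follows at once from the intertwining relation. Indeed, let $U_\alpha\to U$ in the strong operator topology of $\OO(L_p(\N))$ and fix $\eta\in L_q(\N)$; writing $\eta=M_{p,q}(\xi)$ with $\xi=M_{q,p}(\eta)$ and applying the intertwining relation one gets
\[
\norm{\Phi_{p,q}(U_\alpha)\eta-\Phi_{p,q}(U)\eta}_q \;=\; \norm{M_{p,q}(U_\alpha\xi)-M_{p,q}(U\xi)}_q.
\]
As the $U_\alpha$ and $U$ are isometries, the vectors $U_\alpha\xi$ and $U\xi$ all lie in the ball of radius $\norm{\xi}_p$, and $U_\alpha\xi\to U\xi$ in $L_p(\N)$ by hypothesis; uniform continuity of $M_{p,q}$ on that ball then forces the right-hand side to $0$. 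This gives continuity of $\Phi_{p,q}$, and the identical argument, taking $q=2$ (resp. $p=2$) and replacing $M_{p,q}$ by $M_{p,2}$ (resp. $M_{2,p}$), yields continuity of $\Phi_{p,2}$ and $\Phi_{2,p}$.

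The main obstacle is precisely the analytic input above, namely the uniform (Hölder) continuity of the Mazur map on bounded sets of a noncommutative $L_p$-space: this is the only genuinely nontrivial ingredient, while the intertwining identity is a formal computation (modulo the support bookkeeping in the anti-automorphism case) and the bijectivity and homomorphism properties are immediate. I would therefore isolate the Mazur-map estimate as the crucial fact and treat everything else as routine.
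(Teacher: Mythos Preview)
Your proposal is correct and follows essentially the same route as the paper: both arguments hinge on the intertwining identity $\Phi_{p,q}(U)=M_{p,q}\circ U\circ M_{q,p}$ together with the uniform continuity of the Mazur map on bounded sets, from which strong continuity of $\Phi_{p,q}$ follows immediately. The only cosmetic difference is that the paper first reduces to the open subgroup $\OO^+(L_p(\N))$ (so that $\theta$ is an automorphism and the polar-decomposition computation is a one-liner), whereas you treat the anti-automorphism case directly via the identity $|x^*|^{s}=v|x|^{s}v^{*}$; both are fine, and your ``support bookkeeping'' does go through.
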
 

\begin{proof}
It suffices to prove that the mappings  $\Phi_{2,p}, \Phi_{p,2}$ and $ \Phi_{p,q}$  
are continuous on the open subgroups  $\OO^+_{\N}(L_2(\N))$
and  $\OO^+(L_p(\N))$ for $p\neq 2$.

For  $U\in \OO^+(L_p(\N))$ with Yeadon decomposition $(a, \theta)$ and $x\in \N$  with polar decomposition $x=u|x|,$   
we have 
\begin{align*}
 U(x)&= a\theta(u)\theta(|x|) = a\theta(u) \theta(|x|^{\frac{q}{p}})^{\frac{p}{q}} \\
 &= M_{p,q}(a\theta (M_{q,p} (x)) = M_{p,q}\circ U\circ M_{q,p} (x),
\end{align*}
so that  
$$
\Phi_{p,q}(U)= M_{p,q}\circ U\circ M_{q,p} \qquad \text{for all}\  \  U\in \OO^+(L_p(\N)).
$$
It is known that (even for a general von Neumann algebra $\N$) 
the restriction  $M_{p,q}: B_1(L_p(\N)) \to B_1(L_q(\N)) $ of $M_{p,q}$ to the unit ball $B_1(L_p(\N))$ of $L_p(\N))$
 is uniformly continuous (see  \cite[Lemma 3.2]{Raynaud}; a more precise result is  
 proved in \cite{Ricard}: $M_{p, q}$ is $\min\{p/q, 1\}$--H\"older continuous on  $B_1(L_p(\N))$).
Since
\begin{align*}
\Vert \Phi_{p,q} (U)(x) -\Phi_{p,q} (V)(x)\Vert_q=  \Vert M_{p,q}(U(M_{q,p} (x))-M_{p,q}(V(M_{q,p} (x))\Vert_q,
\end{align*}
for  $U, V\in  \OO^+(L_p(\N))$  and $x\in L_q(\N)$, the  proposition follows. $\bsq$
\end{proof} 

\subsection{Group representations by linear complete isometries on $L_p(\N)$}
\label{SS:Rep}

Let $\Ga$ be a discrete group and $p\in [1,+\infty[,\, p\neq 2,$ fixed throughout this section.

For  a mapping $\pi: \Ga\to \OO(L_p(\N))$ or  $\pi: \Ga\to \OO_\N(L_2(\N))$, we have corresponding mappings $u: \Ga\to \UU(\N)$ and  $\theta: \Ga\to \ExAut(\N)$
given by  the Yeadon decomposition $\pi(g) = (u_g, \theta_g)$ for every $g\in \Ga.$
We will refer to $\pi=(u, \theta)$ as the Yeadon decomposition  of $\pi$.
Observe that, if $\pi$ is a homomorphism, then $\theta:\Ga\to \ExAut(\N)$ is in general not  a homomorphism;
however, if  $\pi$ takes its values in $\OO^+(L_p(\N))$,  then $\theta:\Ga\to \Aut(\N)$ is indeed a homomorphism.

Given a group homomorphism $\theta: \Ga\to {\Aut}(\N),g\mapsto \theta_g,$ 
we denote by $Z^1(\Ga, \theta)$ the set of all corresponding  $1$-cocycles, that is,
the set of mappings $u: \Ga\to \UU(\N)$ such that 
$$u_{gh}= u_{g}\theta_{g}(u_{h}) \tout g,h\in\Ga.$$
Two $1$-cocycles $u$ and $v$ are cohomologous, if there exists $w\in \UU(\N)$ such that 
$$ v_g= w u_g\theta_g(w^*)\tout g\in \Ga.$$
The set of $1$-coboundaries $B^1(\Ga, \theta)$ is the set of $1$-cocycles  which are cohomologous
to the trivial cocycle $g\mapsto 1.$


The proof of following proposition is straightforward. 
\begin{proposition}
\label{Pro-Mazur1} Let $\pi: \Ga\to \OO^+(L_p(\N))$ or $\pi: \Ga\to \OO^+_\N(L_2(\N))$ be a mapping with Yeadon decomposition $\pi=(u, \theta).$
The following conditions are equivalent.
\begin{itemize}
\item[(i)] $\pi$ is a group homomorphism;
\item[(ii)] $\theta: \Ga\to  \Aut(\N)$ is a group homomorphism and 
$u: \Ga\to \UU(\N)$ is a $1$-cocycle with respect to $\theta.\, \bsq$ 
\end{itemize}
\end{proposition}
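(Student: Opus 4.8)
The plan is to compute the composition $\pi(g)\pi(h)$ directly from the Yeadon decomposition and then invoke the uniqueness of that decomposition to read off the two conditions in (ii). The crucial structural fact to keep in mind is that, since $\pi$ takes values in $\OO^+(L_p(\N))$ (or in $\OO^+_\N(L_2(\N))$), each $\theta_g$ is an \emph{automorphism} of $\N$, hence multiplicative; this is the only place where the restriction to $\OO^+$ rather than to all of $\OO$ is used, and it is essential, since an anti-automorphism would reverse the order of the product below.

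First I would write out, for $x\in\N$,
\[
\pi(g)\pi(h)(x) = u_g\,\theta_g\bigl(u_h\,\theta_h(x)\bigr) = u_g\,\theta_g(u_h)\,(\theta_g\circ\theta_h)(x),
\]
using that $\theta_g$ is an automorphism in the second equality. Since $\theta_g$ preserves $\UU(\N)$, the element $u_g\theta_g(u_h)$ is again a unitary in $\N$, and $\theta_g\circ\theta_h$ is an automorphism of $\N$; thus the pair $\bigl(u_g\theta_g(u_h),\ \theta_g\circ\theta_h\bigr)$ is precisely the Yeadon decomposition of the isometry $\pi(g)\pi(h)$.

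Next I would compare this with $\pi(gh)$, whose Yeadon decomposition is $(u_{gh},\theta_{gh})$ by definition. The uniqueness of the Yeadon decomposition (Theorem~\ref{Yeadon}; equivalently, evaluate both sides at $1\in\N$, using $\theta(1)=1$ to recover the unitary component, and then cancel it to recover the automorphism, an argument valid verbatim in the $L_2$ case) shows that $\pi(gh)=\pi(g)\pi(h)$ holds if and only if
\[
\theta_{gh}=\theta_g\circ\theta_h \qquad\text{and}\qquad u_{gh}=u_g\,\theta_g(u_h).
\]
The first equality is exactly the statement that $\theta\colon\Ga\to\Aut(\N)$ is a group homomorphism, and the second is exactly the $1$-cocycle relation defining $Z^1(\Ga,\theta)$. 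This yields both implications at once: (i) $\Rightarrow$ (ii) because a homomorphism satisfies $\pi(gh)=\pi(g)\pi(h)$ for all $g,h$, and (ii) $\Rightarrow$ (i) because the two conditions make the displayed identity hold for all $g,h$ (the normalisation $\pi(e)=\mathrm{id}$ then being automatic, or checked by taking $g=h=e$ in the cocycle relation, which forces $u_e=1$).

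Since this is a purely algebraic manipulation resting only on multiplicativity of the $\theta_g$ and on uniqueness of the Yeadon decomposition, I do not expect any genuine obstacle; the one point worth flagging explicitly is precisely the use of the automorphism (as opposed to anti-automorphism) property, which is what both makes the product $u_g\theta_g(u_h)$ land in $\UU(\N)$ in the correct order and gives the cocycle identity rather than a twisted or reversed variant.
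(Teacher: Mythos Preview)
Your argument is correct and is exactly the straightforward computation the paper has in mind; indeed the paper omits the proof entirely, declaring it straightforward, and your write-up supplies precisely the expected details (compute $\pi(g)\pi(h)$ via multiplicativity of $\theta_g$, then invoke uniqueness of the Yeadon decomposition).
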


Given $\pi\in \Hom (\Ga, \OO^+(L_p(\N)),$ with Yeadon decomposition $(u,\theta)$,
 there are two  associated actions $\pi^l$ and  $\pi^r$
in $\Hom (\Ga, \Aut(\N)$ defined by 
$$\pi^l(g)= \theta_g \ \  \text{and}\ \  \pi^r(g)= \Ad(u_g) \theta_g \tout g\in\Ga.$$
(For $u\in \UU(\N),$  $\Ad(u)$ denotes the automorphism of $\N$ given by 
$\Ad(u)x= uxu^*$ for $x\in \N.$)

  Recall that $G=\OO^+(L_p(\N))$ or $G=\OO^+_\N(L_2(\N))$ acts on the set of all mappings $\pi:\Ga\to   G$
 by conjugation ${\rm Ad}(U)\pi (g)= U\pi(g) U^{-1}$ for  $U\in G,\, g\in\Ga.$

\begin{proposition}
\label{Pro-Mazur2} 
Let $\pi$ and $\rho$ be homomorphisms 
from $\Ga$ to  $G=\OO^+(L_p(\N))$  or  $G=\OO^+_\N(L_2(\N))$, with Yeadon decompositions $\pi= (u, \theta)$ 
and $\rho = (v, \alpha).$ The following conditions are equivalent:
\begin{itemize}
\item[(i)]  $\rho$ belongs to  the  $G$-orbit  of $ \pi$;
\item[(ii)] there exists $\vfi\in \Aut(\N)$ such that   $\alpha=\Ad(\vfi)(\theta)$
and such that $v$ is cohomologous to $\Ad(\vfi)(u):g\mapsto \vfi(u_g)$ in $Z^1(\Ga, \Ad(\vfi)(\theta)).$
\end{itemize}
\end{proposition}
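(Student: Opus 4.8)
The plan is to make everything explicit through the semidirect product description of $G$ furnished by Corollary~\ref{Cor-Yeadon}, under which an element $U\in G$ is recorded by its Yeadon decomposition $U=(w,\vfi)$ with $w\in\UU(\N)$ and $\vfi\in\Aut(\N)$, and the group law reads $(w_1,\vfi_1)(w_2,\vfi_2)=(w_1\vfi_1(w_2),\vfi_1\vfi_2)$; in particular $(w,\vfi)^{-1}=(\vfi^{-1}(w^*),\vfi^{-1})$. The whole statement will then drop out of a single computation of the conjugate $\Ad(U)\pi(g)=U\pi(g)U^{-1}$ in these coordinates, organized so that the two implications (i)$\Rightarrow$(ii) and (ii)$\Rightarrow$(i) are merely two readings of one identity.

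First I would compute, for $U=(w,\vfi)$ and $\pi(g)=(u_g,\theta_g)$, the product $U\pi(g)U^{-1}$ by applying the group law twice. The automorphism component is $\vfi\theta_g\vfi^{-1}=\Ad(\vfi)(\theta_g)$, and, using $(\vfi\theta_g)(\vfi^{-1}(w^*))=\Ad(\vfi)(\theta_g)(w^*)$, the unitary component works out to $w\,\vfi(u_g)\,\Ad(\vfi)(\theta_g)(w^*)$. Writing $\alpha_g=\Ad(\vfi)(\theta_g)$, the Yeadon decomposition of $\Ad(U)\pi$ is therefore the pair $\big(g\mapsto w\,\vfi(u_g)\,\alpha_g(w^*),\ g\mapsto\Ad(\vfi)(\theta_g)\big)$. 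Comparing with the Yeadon decomposition $(v,\alpha)$ of $\rho$ and invoking the uniqueness of the pair in Theorem~\ref{Yeadon}, the equation $\Ad(U)\pi=\rho$ is equivalent to the two identities $\alpha=\Ad(\vfi)(\theta)$ and $v_g=w\,\vfi(u_g)\,\alpha_g(w^*)$ for all $g\in\Ga$.

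The remaining point is to recognize the second identity as a cohomology relation. Here I would first check that $\Ad(\vfi)(u)\colon g\mapsto\vfi(u_g)$ is a genuine $1$-cocycle for $\Ad(\vfi)(\theta)$, so that the cohomology assertion in (ii) even makes sense; this is a one-line verification, applying $\vfi$ to the cocycle identity $u_{gh}=u_g\theta_g(u_h)$ and inserting $\vfi^{-1}\vfi$ to produce $\alpha_g(\vfi(u_h))=\vfi(\theta_g(u_h))$. Granting this, the identity $v_g=w\,\vfi(u_g)\,\alpha_g(w^*)$ says precisely that $v$ is cohomologous to $\Ad(\vfi)(u)$ in $Z^1(\Ga,\Ad(\vfi)(\theta))$, with $w$ implementing the coboundary. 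This yields (i)$\Rightarrow$(ii) at once; for (ii)$\Rightarrow$(i) I would simply reverse the reading, building $U=(w,\vfi)\in G$ from the data of (ii) and invoking the same identity to conclude $\Ad(U)\pi=\rho$.

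I do not anticipate a genuine obstacle: the content is entirely the bookkeeping of the semidirect product. The only places that demand care are the order of the three factors in the unitary component of $U\pi(g)U^{-1}$ and the correct placement of $\vfi$ and of $\alpha_g=\Ad(\vfi)(\theta_g)$ in the cocycle and coboundary relations. The case $G=\OO^+_\N(L_2(\N))$ is handled verbatim, since by the discussion preceding the proposition that group carries the identical semidirect product structure in Yeadon coordinates.
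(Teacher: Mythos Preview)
Your proposal is correct and follows exactly the same approach as the paper: the paper's proof simply records the Yeadon decomposition of $\Ad(U)\pi$ for $U=(w,\vfi)$ as $\alpha_g=\vfi\theta_g\vfi^{-1}$ and $v_g=w\vfi(u_g)(\Ad(\vfi)(\theta_g))(w^*)$ and then says ``the claim follows.'' You have written out the same computation in more detail, including the explicit check that $\Ad(\vfi)(u)$ is a cocycle for $\Ad(\vfi)(\theta)$ and the identification of the coboundary relation, which the paper leaves to the reader.
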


\begin{proof}
For an element $U=(w, \vfi)$ in $G,$
 one computes that the Yeadon decomposition $(v, \alpha)$ of $\Ad(U)\pi$ 
 is given by
 $$
 \alpha_g= \Ad(\vfi)(\theta_g)= \vfi \theta_g \vfi^{-1}, \quad v_g= w\vfi(u_g) (\Ad(\vfi)(\theta_g)) (w^*)
 $$
 for every $g\in \Ga$ and the claim follows.$\bsq$
\end{proof}

 \subsection{Groups and factors with Kazhdan's Property (T)}
\label{SS:Kazhdan}
 We recall (see \cite{BHV}) that a (discrete) group $\Ga$ has Kazhdan's Property (T)
 if there exist a  finite subset $S$ of $\Ga$ and $\eps >0$ with the following property: 
if a  unitary representation $\pi: \Ga \to \UU(\H)$ of $\Ga$ in a Hilbert space $\H$
has a  $(S,\eps)$-invariant unit vector, 
that is,  a unit vector $v\in \H$ with 
$$ \Vert \pi(s)v-v\Vert \leq \eps \tout s\in S,
$$
 then 
there exists a non-zero $\Ga$-invariant vector in $\H$.
The pair $(S, \eps)$ is called a \emph{Kazhdan pair} for $\Ga.$ 
Moreover, if this is  the case, then for every $\delta>0$ and every $(S,\delta\eps)$-invariant 
unit vector $v,$  there exists a $\Ga$-invariant vector $w\in \H$ with 
$\Vert v- w\Vert\leq\delta$ (see Proposition~1.1.9 in \cite{BHV}).

We shall need the extension from \cite{NPS} of the previous result to projective unitary representations;
recall that a projective unitary representation of $\Ga$ in a Hilbert space $\H$
is a mapping $\pi$ from  $\Ga$ to the unitary group $\UU(\H)$ of $\H$ such that,
for every $g,h\in G,$  there exists a scalar $\mu_{g,h}\in {\mathbf S}^1=\{\la\in \CCC : \ |z|=1\}$
 with 
 $$\pi(g)\pi(h)= \mu_{g,h} \pi(gh)\tout g,h\in \Ga.$$ 
 A projective unitary representation $\pi$
determines a homomorphism $\widetilde{\pi}:\Ga\to \PP\UU(\H) $
to the projective  unitary group $\PP\UU(\H)=\UU(\H)/\mathbf{S}^1$ of $\UU(\H),$ where
$\mathbf{S}^1$ is identified with  the subgroup  of scalar
multiples of the identity operator ${\rm Id}_\H;$ conversely, every lift $\pi:\Ga\to \UU(\H)$ of  a homomorphism $\widetilde{\pi}:\Ga\to \PP\UU(\H)$
 is a projective unitary representation of $\Ga.$
 The mapping $\mu: \Ga\times \Ga \to \mathbf{S}^1, (g,h)\mapsto \mu_{g,h}$
 is a $2$-cocycle, that is, it satisfies the identity 
 $$
 \mu_{h,k}\mu_{g, hk}= \mu_{g,h}\mu_{gh, k}\, \, \text{for all} \, g,h, k\in \Ga.
 $$
 If $\mu: \Ga\times \Ga \to \mathbf{S}^1$ is a $2$-coboundary, that is, if there exists
 a mapping $\la: \Ga\to \mathbf{S}^1$ such that 
 $$\mu_{g,h} =\la_g \la_h \overline{\la_{gh}} \tout g,h\in \Ga,$$
  then $\pi$ gives rise to a genuine representation 
 $\overline{\pi}: \Ga\to \UU(\H),$ defined by 
 $$
 \overline{\pi}(g) = \overline{\la_g} \pi(g) \,  \, \text{for all} \, g\in \Ga
 $$
and inducing the same homomorphism $\Ga\to \PP\UU(\H)$ as $\pi.$

 Given a projective unitary representation $\pi:\Ga\to \UU(\H)$ and a subset $S$ of $\Ga$ and $\eps>0,$
 we will say that a unit vector $v\in \H$ is \emph{projectively $(S,\eps)$-invariant} 
 if,  for every $s\in S,$
 there exists $\alpha_s\in \CCC$ such that $\Vert \pi(s) v-\alpha_sv\Vert \leq\eps.$
 
 The following result  is  proved in  \cite{NPS}
 in the more general situation of  a pair of groups with the relative Property (T).
 When $\Ga$ has Property (T), with a Kazhdan pair $(S, \eps),$
  one checks easily that the proof of  Lemma 1.1 of \cite{NPS} yields exactly 
 the following result.

 \begin{theorem}
 \label{Theo-PopaEtAl}
 \textbf{{\rm (\cite{NPS})}} Let $\Ga$ be a Kazhdan group, with a Kazhdan pair $(S, \eps).$ 
 Fix $\delta$ with $0<\delta <1.$ Let  $\pi: \Ga \to \UU(\H)$  be a projective unitary representation of $\pi$,
 with corresponding $2$-cocycle $\mu: \Ga\times \Ga \to \mathbf{S}^1$, 
 and let $v\in \H$ be unit vector which is projectively $(S,\eps\delta^2/56)$-invariant.
 Then there exists a mapping $\la: \Ga\to \mathbf{S}^1$ with $\mu_{g,h} =\la_g \la_h \overline\la_{gh}$
 for all $g,h\in \Ga$  and a vector $v_0\in \H$  such that  
 $$\Vert v-v_0\Vert \leq\delta \qquad\text{and}\qquad\pi(g) v_0= \la _g v_0$$
  for all $g\in \Ga.$
 In particular, $\mu$ is a coboundary.
\end{theorem}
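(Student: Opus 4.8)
The plan is to eliminate the scalar $2$-cocycle $\mu$ by passing to the conjugation representation on Hilbert--Schmidt operators, where Property (T) applies verbatim, and then to recover the projective eigenvector by a spectral argument. Concretely, I would introduce the genuine unitary representation $\sigma\colon \Ga\to \UU(\HS(\H))$ on the Hilbert space $\HS(\H)$ of Hilbert--Schmidt operators on $\H$, defined by $\sigma(g)T=\pi(g)T\pi(g)^*$. The relation $\pi(g)\pi(h)=\mu_{g,h}\pi(gh)$ together with $\abs{\mu_{g,h}}=1$ shows that the phases cancel, so $\sigma$ is an honest homomorphism; and since $\sigma(g)(T^*)=(\sigma(g)T)^*$, it preserves the real Hilbert space $\HS(\H)_{\mathrm{sa}}$ of self-adjoint Hilbert--Schmidt operators.

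Next I would transport the projective near-invariance of $v$ to genuine near-invariance of the rank-one orthogonal projection $Q_v$ onto $\CCC v$, regarded as a unit vector of $\HS(\H)_{\mathrm{sa}}$. The key identity is $\sigma(s)Q_v=Q_{\pi(s)v}$, which depends only on the line $\CCC\pi(s)v$ and not on any phase. Choosing the optimal scalar $\langle \pi(s)v,v\rangle$, the minimal distance from $\pi(s)v$ to $\CCC v$ equals $1-\abs{\langle \pi(s)v,v\rangle}^2$, so the hypothesis gives $1-\abs{\langle \pi(s)v,v\rangle}^2\le (\eps\delta^2/56)^2$; then, via $\norm{Q_a-Q_b}_{\HS}^2=2(1-\abs{\langle a,b\rangle}^2)$ for unit vectors, one obtains $\norm{\sigma(s)Q_v-Q_v}_{\HS}\le \sqrt2\,\eps\delta^2/56$ for every $s\in S$. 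This \emph{linear} (rather than merely H\"older) dependence on the invariance parameter is exactly where the constant $56$ is calibrated.

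I would then apply Property (T) in the quantitative form recalled above (Proposition~1.1.9 in \cite{BHV}) to the self-adjoint vector $Q_v$ inside $\HS(\H)_{\mathrm{sa}}$: since $Q_v$ is $(S,\eps\widetilde\delta)$-invariant for $\sigma$ with $\widetilde\delta$ of order $\delta^2$, there is a $\sigma$-invariant self-adjoint Hilbert--Schmidt operator $T$ with $\norm{T-Q_v}_{\HS}\le\widetilde\delta$. Invariance means $\pi(g)T=T\pi(g)$ for all $g$, so $T$ intertwines $\pi$ with itself. Being self-adjoint and compact, $T$ has real spectrum, and $\norm{T-Q_v}_{\mathrm{op}}\le\norm{T-Q_v}_{\HS}<1/2$ forces, by Weyl's inequality, exactly one eigenvalue above $1/2$ (close to $1$) with all others below $1/2$ (close to $0$). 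Hence the top spectral projection is one-dimensional; letting $v_0$ span its range, the line $\CCC v_0$ is $\pi$-invariant because $\pi(g)$ commutes with $T$, so $\pi(g)v_0=\la_g v_0$ with $\abs{\la_g}=1$. Evaluating $\pi(g)\pi(h)=\mu_{g,h}\pi(gh)$ at $v_0$ then yields $\la_g\la_h=\mu_{g,h}\la_{gh}$, i.e. $\mu_{g,h}=\la_g\la_h\overline{\la_{gh}}$, so $\mu$ is a coboundary.

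Finally, for the estimate $\norm{v-v_0}\le\delta$ I would fix the phase of $v_0$ so that $\langle v_0,v\rangle\ge0$ and bound $\abs{\langle v_0,v\rangle}^2$ from below using $\langle Tv_0,v_0\rangle\ge 1-\widetilde\delta$ (Weyl) and $\langle Tv_0,v_0\rangle\le\abs{\langle v_0,v\rangle}^2+\widetilde\delta$, which gives $\norm{v-v_0}^2\le 2(1-\abs{\langle v_0,v\rangle}^2)\le 4\widetilde\delta$; since $\widetilde\delta$ is of order $\delta^2$, this lands at $\norm{v-v_0}\le\delta$. The main obstacle is precisely this quantitative bookkeeping: keeping all losses linear in the invariance parameter at the translation step and through the spectral estimate, so that the single quantity $\eps\delta^2/56$ simultaneously guarantees the spectral gap (hence a rank-one top projection) and the final distance bound.
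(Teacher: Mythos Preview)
Your proposal is correct and follows the standard argument for this kind of result. The paper itself does not supply a proof of this theorem: it simply records that the statement is extracted from Lemma~1.1 of \cite{NPS}, noting that when $(S,\eps)$ is a Kazhdan pair the proof there gives precisely these constants. The approach in \cite{NPS} is exactly the one you outline: pass to the genuine unitary representation $\sigma(g)T=\pi(g)T\pi(g)^*$ on Hilbert--Schmidt operators so that the $2$-cocycle disappears, observe that the rank-one projection $Q_v$ is an almost-invariant unit vector for $\sigma$, apply Property~(T) quantitatively to obtain a nearby $\sigma$-invariant self-adjoint Hilbert--Schmidt operator $T$, and then use the spectral gap of $T$ (forced by $\norm{T-Q_v}_{\mathrm{op}}<1/2$) to extract a one-dimensional $\pi$-invariant line and the trivializing map $\la$. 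Your bookkeeping with $\norm{Q_a-Q_b}_{\HS}^2=2(1-\abs{\langle a,b\rangle}^2)$ and the Weyl-type estimate $\lambda_1(T)\ge 1-\widetilde\delta$, $\lambda_1(T)\le \abs{\langle v_0,v\rangle}^2+\widetilde\delta$ is clean and lands comfortably within the stated bound $\norm{v-v_0}\le\delta$ (indeed with room to spare, since $4\widetilde\delta=4\sqrt{2}\,\delta^2/56<\delta^2$). One minor slip: you write that the minimal distance from $\pi(s)v$ to $\CCC v$ ``equals $1-\abs{\langle\pi(s)v,v\rangle}^2$''; this is the \emph{square} of that distance, but your subsequent inequality uses it correctly. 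Also, when invoking Property~(T) on the self-adjoint part $\HS(\H)_{\mathrm{sa}}$, it is cleanest to apply the Kazhdan estimate in the full complex space $\HS(\H)$ and then replace the resulting invariant vector $T'$ by $\tfrac12(T'+(T')^*)$, which is still invariant (since $\sigma(g)$ commutes with the adjoint) and no further from $Q_v$; this avoids any discussion of real versus complex Kazhdan pairs.
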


We now recall Property (T) for  von Neumann algebras as defined in  \cite{CoJo}.

Let $\N$ be  finite factor.  A \emph{Hilbert bimodule}
over $\N$ is a Hilbert space $\H$ carrying two commuting 
normal representations, one  of  $\N$ and one of the opposite algebra $\N^0;$ we will 
write 
$$v\mapsto xvy   \qquad \text{for all} \  v\in \H,\  x,y\in \N.$$ 
The factor $\N$ is said to have   Property (T) if there
exist a finite subset $F$ of $\N$ and $\eps'>0$
such that the following property  holds:
if a Hilbert bimodule $\H$ for  $\N$ contains a unit vector 
$v$ which is $(F,\eps')$-central, that is, which is such that 
$$\Vert xv-vx\Vert\leq \eps' \tout x\in F,$$
then $\H$ has a non-zero central vector,
that is, a non-zero vector $w\in \H$
such that $xw =w x$ for all $x\in \N.$
Moreover, one can choose $(F,\eps')$ such that
for every $\delta>0$ and every $(F,\delta\eps')$-central
unit vector $v,$   there exists a central  vector $w\in \H$ with 
$\Vert v- w\Vert\leq\delta$ (see Proposition~1 in \cite{CoJo}).
We call such a pair $(F, \eps')$  a \emph{Kazhdan pair} for $\N.$

It was   shown in  \cite{CoJo} (see Proposition~12.1.19 in \cite{Brown-Ozawa}) that the subgroup $\Inn({\N})$  of inner automorphisms
of $\N$ (that is, the  subgroup of  automorphisms of the form
$\Ad(u)$ for $u\in \UU(\N)$) is open  in $\Aut (\N).$
  Here, $\Aut (\N)$ is endowed with the topology of pointwise 
$L_2$-convergence (we could also take the induced topology from $\OO^+(L_p(\N))$ as
above for any $1\leq p<\infty$). 
We will need a quantitative estimate, in terms of a Kazhdan pair $(F, \eps'),$
for  the distance to $1$ of the  unitary operators
defining the appropriate inner automorphisms.

\begin{proposition}
\label{Pro-Kazhdan1} 
Let $\N$ be a finite factor with Property (T). 
Let $(F, \eps')$ be a Kazhdan pair for $\N$ .
Let  $0<\delta <1$  and  let  $\V_{\delta}$ be the neighbourhood  of the trivial automorphism ${\rm id}_{\N}$  
given by
$$\V_{\delta} = \{\theta\in \Aut(\N)\, : \, \Vert \theta(x)-x \Vert_2\leq \eps'\delta/2  \ \text{for all}\ x \in F\}.$$
Then $\V_{\delta}$ is contained in $\Inn({\N}).$  More precisely, for every $\theta$ in $\V_{\delta},$ there exists $u$ in $ \UU(\N)$ with $\theta = \Ad(u)$ and
 $\Vert u- 1\Vert_2 \leq \delta.$

\end{proposition}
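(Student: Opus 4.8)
The plan is to deduce the statement directly from the bimodule formulation of Property (T) for the factor $\N$. Given $\theta \in \V_{\delta}$, I would associate to it the Hilbert bimodule $\H_\theta$ whose underlying Hilbert space is $L_2(\N)$, with left action by left multiplication and right action twisted by $\theta$:
$$x\cdot \xi\cdot y = x\,\xi\,\theta(y) \tout \xi \in L_2(\N),\ x,y\in \N.$$
Since $\theta$ is a normal $*$-automorphism, the two actions are commuting normal representations of $\N$ and $\N^0$, so $\H_\theta$ is a genuine Hilbert bimodule. The point of this construction is that a central vector $w$ (one with $x\cdot w = w\cdot x$, i.e. $xw = w\theta(x)$ for all $x\in \N$) encodes exactly the data needed to implement $\theta$ by an inner automorphism.

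Next I would exhibit an almost-central unit vector. Take $\xi = 1 \in L_2(\N)$, which is a unit vector since $\tau(1)=1$. For $x\in F$ one has $x\cdot 1 - 1\cdot x = x - \theta(x)$, so $\Vert x\cdot 1 - 1\cdot x\Vert_2 = \Vert \theta(x) - x\Vert_2 \leq \eps'\delta/2$ by the definition of $\V_{\delta}$. Thus $1$ is $(F, (\delta/2)\eps')$-central. Applying the quantitative form of Property (T) for $\N$ to the Kazhdan pair $(F,\eps')$, with the parameter $\delta/2$ in place of $\delta$, I obtain a central vector $w \in \H_\theta$ with $\Vert 1 - w\Vert_2 \leq \delta/2$.

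It remains to turn the central vector $w$ into a unitary and to estimate its distance to $1$. From $xw = w\theta(x)$ one gets, by taking adjoints and replacing $x$ by $x^*$, the companion identity $w^* x = \theta(x) w^*$. Combining the two shows $w^* w\,\theta(x) = \theta(x)\, w^* w$ for all $x$; since $\theta$ is surjective, $w^* w \in L_1(\N)$ commutes with all of $\N$, hence is central, hence a scalar $c\geq 0$ as $\N$ is a factor, and likewise $ww^* = c1$ (the two scalars agree by traciality). The bound $\Vert 1 - w\Vert_2 \leq \delta/2 < 1/2$ forces $\Vert w\Vert_2 > 0$, so $c = \Vert w\Vert_2^2 > 0$ and $u := \Vert w\Vert_2^{-1} w$ is a unitary in $\N$ satisfying $xu = u\theta(x)$, that is $\theta = \Ad(u^*)$. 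Finally, writing $u_0 = u^*$ and using $\Vert u_0 - 1\Vert_2 = \Vert u - 1\Vert_2$ together with the triangle inequality
$$\Vert u - 1\Vert_2 \leq \bigl|\,1 - \Vert w\Vert_2\,\bigr| + \Vert w - 1\Vert_2 \leq \Vert w-1\Vert_2 + \Vert w - 1\Vert_2 \leq \delta,$$
gives $\theta = \Ad(u_0)$ with $\Vert u_0 - 1\Vert_2 \leq \delta$, as required.

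I expect the main obstacle to be this last step: passing from the abstract central vector furnished by Property (T) to an honest unitary of $\N$. The factor hypothesis is exactly what makes $w^* w$ scalar and hence allows the normalization; without it one would only obtain a partial isometry and $\theta$ would merely be implemented on a corner. The bookkeeping of the two identities $xw = w\theta(x)$ and $w^* x = \theta(x)w^*$ must be handled with some care, since the relevant products live in $L_1(\N)$ rather than in $\N$ itself.
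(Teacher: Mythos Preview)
Your argument is correct and follows essentially the same route as the paper's proof: both build the $\theta$-twisted bimodule on $L_2(\N)$, observe that $1$ is $(F,\eps'\delta/2)$-central, extract a central vector $w$ within $\delta/2$ of $1$, and then use factoriality to see that $w^*w$ is a scalar and hence $w$ is a multiple of a unitary implementing $\theta$. The only cosmetic differences are that the paper twists the left action rather than the right (so it obtains $\theta=\Ad(u)$ directly, without passing to $u_0=u^*$) and phrases the normalization via the polar decomposition $w=u|w|$, which coincides with your $w/\Vert w\Vert_2$ once $|w|$ is scalar.
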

 
 \begin{proof}
 We follow the  standard proof that $\Inn (\N)$ is open
in $\Aut (\N)$  as given, for instance, 
in  the proof of  Proposition 12.1.19 in \cite{Brown-Ozawa}.

Let $\theta \in \V_{\delta}.$ We define a bimodule structure on  $L_2(\N)$  over $\N$ by
 $$v\mapsto \theta(x)vy  \  \ \text{for all} \ \  v\in L_2(\N), \   x,y\in \N.$$ 
 Then $1\in L_2(\N)$ is a unit vector which is $(F,\eps'\delta/2)$-central.
 Hence, there exists a central vector $w\in L_2(\N)$ with 
$\Vert w- 1\Vert_2\leq\delta/2.$  
Let $w=u|w|$ be the polar decomposition of $w,$ viewed as a densely defined operator on $L_2(\N)$  affiliated
to $\N.$  Then, $|w|$ is in the center of $\N$ and hence $|w|=\la 1$ for some $\la>0.$
It follows that $u$ is a unitary element in $\N$  such that $\theta = \Ad(u).$
As $ \Vert w\Vert_2= \la,$ we have 
$ |1-\la|\leq \Vert w- 1\Vert_2 \leq \delta/2$ and therefore
$$
\Vert u- 1\Vert_2  \leq \Vert u - w\Vert_2+ \Vert w- 1\Vert_2=|1-\la| +\Vert w- 1\Vert_2  \leq  \delta.\ \bsq
$$

 \end{proof}

\subsection{Projective $1$-cocycles for actions of Kazhdan groups}
\label{SS:ProjCocy}

 In the sequel, we will need to deal with
 mappings $ \Ga\to \UU(\N)$ 
 which are $1$- cocycles for an action  of  $\Ga$ on $\N$ modulo scalars
 in the following sense (cocycles of this type appear in Section 1.3 of \cite{Popa}, where they are called weak $1$-cocycles).

 \begin{definition}
 \label{Def-ProjCocycle}
 Let $\Ga$ be a group, $\N$  a von Neumann algebra, and $\theta: \Ga\to \Aut(\N)$ a homomorphism.
 A \emph{projective $1$-cocycle}  for $\theta$ is a mapping $u: \Ga\to \UU(\N)$ such 
 that, for every $g,h\in \Ga,$  there exists a scalar $\mu_{g,h}\in {\mathbf S}^1$
 with 
 $$u_g \theta_g(u_h)=  \mu_{g,h} u_{gh}.$$
 Two projective $1$-cocycles $u$ and $v$  are cohomologous if there exist 
 $w$ in  $\UU(\N)$ and a mapping $\la: \Ga\to {\mathbf S}^1$ such that 
$$ v_g= \la_gw u_g\theta_g(w^*) \tout g\in \Ga.$$
 A \emph{projective coboundary} is a projective $1$-cocycle which is cohomologous to the
 trivial cocycle $g\mapsto 1.$
 \end{definition}

 The following lemma, which can be checked by a straightforward computation, shows that 
 projective cocycles appear naturally. 
 \begin{lemma}
 \label{Lem-ProjCocy}
 Let $\Ga$ be a group, $\N$  a factor, and $\theta: \Ga\to \Aut(\N)$ a homomorphism.
 For a mapping  $u:\Ga\to \UU(\N),$ the following properties are equivalent.
 \begin{itemize}
 \item[(i)] $u$ is a projective $1$-cocycle for $\theta;$
 \item[(ii)] the mapping $g\mapsto \Ad(u_g)\theta_g$ is a homomorphism from $\Ga$ to  $\Aut(\N).\bsq$
 \end{itemize}
 \end{lemma}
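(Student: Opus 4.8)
The plan is to show directly that the homomorphism property of $g\mapsto \Ad(u_g)\theta_g$ is equivalent to the defining relation of a projective $1$-cocycle, the decisive point being that in a factor two inner automorphisms coincide precisely when the implementing unitaries differ by a scalar of modulus one.

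First I would set $\alpha_g = \Ad(u_g)\theta_g$ and compute the composition. Using that $\theta$ is a homomorphism, so that $\theta_g\theta_h = \theta_{gh}$, a short calculation gives
$$
\alpha_g(\alpha_h(x)) = u_g\theta_g(u_h)\,\theta_{gh}(x)\,\theta_g(u_h)^*u_g^* = \Ad\bigl(u_g\theta_g(u_h)\bigr)\theta_{gh}(x)
$$
for all $x\in\N$, where the last equality uses $(u_g\theta_g(u_h))^* = \theta_g(u_h)^*u_g^*$. On the other hand $\alpha_{gh} = \Ad(u_{gh})\theta_{gh}$. Since $\theta_{gh}$ is a bijection of $\N$, the identity $\alpha_g\alpha_h = \alpha_{gh}$ holds if and only if $\Ad(u_g\theta_g(u_h)) = \Ad(u_{gh})$.

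Next I would invoke the factor hypothesis. For two unitaries $a,b\in\UU(\N)$ one has $\Ad(a) = \Ad(b)$ if and only if $b^*a$ lies in the center of $\N$; indeed $axa^*=bxb^*$ for all $x$ rearranges to $(b^*a)x(b^*a)^* = x$, that is, $b^*a$ commutes with every $x\in\N$. As $\N$ is a factor its center consists of scalar multiples of $1$, and $b^*a$ being unitary this scalar has modulus one. Applying this with $a = u_g\theta_g(u_h)$ and $b = u_{gh}$, the condition $\Ad(u_g\theta_g(u_h)) = \Ad(u_{gh})$ is equivalent to the existence of $\mu_{g,h}\in\mathbf{S}^1$ with $u_g\theta_g(u_h) = \mu_{g,h}u_{gh}$, which is exactly the defining relation in Definition~\ref{Def-ProjCocycle}. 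Chaining these equivalences over all $g,h\in\Ga$ yields (i)$\Leftrightarrow$(ii).

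The computation is entirely routine, and there is no serious obstacle; the only point requiring care is the elementary but essential fact that $\Ad(a)=\Ad(b)$ forces $b^*a$ to be central, together with the reduction of this center to $\mathbf{S}^1\cdot 1$. This is precisely where the factor hypothesis, rather than that of a general von Neumann algebra, is used.
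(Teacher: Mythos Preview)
Your argument is correct and is exactly the straightforward computation the paper alludes to; the paper gives no further details beyond remarking that the lemma ``can be checked by a straightforward computation.'' Your identification of the factor hypothesis as the point where $\Ad(a)=\Ad(b)$ forces $a$ and $b$ to differ by a unimodular scalar is precisely the heart of the matter.
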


We denote by $Z^1_{\rm proj}(\Ga, \theta)$
and by  $B^1_{\rm proj}(\Ga, \theta)$ the set of projective $1$-cocycles 
and coboundaries for $\theta$.
We equip $Z^1_{\rm proj}(\Ga, \theta)$  with the topology 
of pointwise $L_2$-convergence:  a sequence $(u^{(n)})_n$ in $Z^1_{\rm proj}(\Ga, \theta)$ converges to 
$u\in Z^1_{\rm proj}(\Ga, \theta)$ if $\lim_n \Vert u^{(n)}_g -u_g\Vert_2=0$  for every $g\in \Ga.$

 Assume now that $\Ga$ has Property (T). We will need to know that  cohomology classes  in $Z^1_{\rm proj}(\Ga, \theta)$ 
are  open.  This is not true in general even
for classes in $Z^1(\Ga, \theta)$ and even when $\theta$ is ergodic (see Examples 4 and 8 in \cite{Kawa}).   
However, the following result was shown  in \cite[Theorem~7]{Kawa}.
Let $u\in Z^1(\Ga, \theta)$ be such that the action of $\Ga$ on $\N$ given by 
$g\mapsto \Ad(u_g) \theta_g$ is ergodic; then the equivalence class  of   $u$ 
 is open in  $Z^1(\Ga, \theta).$ 
 Following the same proof and making crucial use of Theorem~\ref{Theo-PopaEtAl},
 we now show that a quantitative version of this result is  true for  projective $1$-cocycles.

\begin{proposition}
\label{Pro-Kawa}
Let $\Ga$ be a group with Kazhdan's Property (T),  with a Kazhdan pair $(S, \eps).$ 
 Let $\N$ be a finite factor and $\theta: \Ga\to \Aut(\N)$ a homomorphism.
Let $u:\Ga\to \UU(\N)$ be a projective $1$-cocycle  for $\theta$. 
Assume that the action of $\Ga$ on $\N$ given by  $g\mapsto \Ad(u_g)\theta_g$
is ergodic. Fix   $0<\delta<1$ and let  $\U_{\delta}$  be the  neighbourhood  of $u$  in $Z^1_{\rm proj}(\Ga, \theta)$
defined by 
$$\U_{\delta} = \{ v\in Z^1_{\rm proj}(\Ga, \theta) : \,  \Vert  v_s- u_s \Vert_2\leq \eps\delta^2/224  \ \text{for all}\ s \in S\}.$$
Then every $v\in \U_{\delta}$ is cohomologous to $u.$ More precisely, for  every $v\in \U_{\delta},$ there exists $w\in \UU(\N)$ 
with  
$\Vert w- 1\Vert_2 \leq \delta$  and a mapping $\la: \Ga\to {\mathbf S}^1$ such that 
$ v_g= \la_gw u_g\theta_g(w^*)$ for all $g\in \Ga.$ 
\end{proposition}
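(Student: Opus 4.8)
The plan is to encode the desired cohomology relation $v_g = \la_g\, w\, u_g\, \theta_g(w^*)$ as an eigenvector equation for a suitable projective unitary representation of $\Ga$ on the Hilbert space $L_2(\N)$, and then to extract an honest unitary $w$ from an approximate eigenvector supplied by Theorem~\ref{Theo-PopaEtAl}. Concretely, I would define $\pi\colon \Ga\to \UU(L_2(\N))$ by
$$\pi(g)\xi = v_g\,\theta_g(\xi)\,u_g^* \tous \xi\in L_2(\N).$$
Since $u_g,v_g$ are unitaries and $\theta_g$ is trace preserving, each $\pi(g)$ is a unitary of $L_2(\N)$. Using $u_g\theta_g(u_h)=\mu_{g,h}u_{gh}$ and the analogous relation $v_g\theta_g(v_h)=\nu_{g,h}v_{gh}$, valid because $u,v$ are projective $1$-cocycles for $\theta$, a direct computation shows that $\pi$ is a projective unitary representation with $2$-cocycle $\omega_{g,h}=\nu_{g,h}\overline{\mu_{g,h}}$. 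The purpose of this definition is that a vector $w'\in L_2(\N)$ satisfies $\pi(g)w'=\la_g w'$ if and only if $v_g\theta_g(w')=\la_g w' u_g$, which is precisely the cohomology relation once $w'$ is shown to be a scalar multiple of a unitary.

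Next I would test the representation on the cyclic vector $1\in\N\subset L_2(\N)$, which is a unit vector since $\tau(1)=1$. Because $\theta_g(1)=1$ and right multiplication by the unitary $u_s^*$ is an $L_2$-isometry,
$$\Vert \pi(s)1-1\Vert_2 = \Vert v_s u_s^*-1\Vert_2 = \Vert v_s-u_s\Vert_2 \leq \eps\delta^2/224 \tous s\in S,$$
for every $v\in\U_{\delta}$. As $\eps\delta^2/224=\eps(\delta/2)^2/56$, the vector $1$ is projectively $(S,\eps(\delta/2)^2/56)$-invariant (take every phase equal to $1$), so Theorem~\ref{Theo-PopaEtAl}, applied with $\delta/2$ in place of its parameter $\delta$, yields a phase map $\la\colon \Ga\to {\mathbf S}^1$ and a vector $w'\in L_2(\N)$ with $\Vert w'-1\Vert_2\leq \delta/2$ and $\pi(g)w'=\la_g w'$ for all $g$; equivalently $v_g\theta_g(w')=\la_g w' u_g$.

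The decisive step is to unitarize $w'$ using ergodicity. Combining the eigenvector relation with its adjoint, one computes
$$\theta_g(w'^*w') = \bigl(\overline{\la_g}\,u_g^* w'^* v_g\bigr)\bigl(\la_g\,v_g^* w' u_g\bigr) = u_g^*\,(w'^*w')\,u_g,$$
that is $(\Ad(u_g)\theta_g)(w'^*w')=w'^*w'$; thus the positive operator $w'^*w'=|w'|^2$, affiliated with $\N$, is invariant under the action $g\mapsto \Ad(u_g)\theta_g$. Its spectral projections lie in $\N$ and are invariant as well, so by ergodicity each of them is $0$ or $1$, forcing $|w'|^2=\beta^2 1$ for a scalar $\beta\geq 0$; since $w'\neq 0$, we have $\beta>0$. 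Then $w:=w'/\beta$ is bounded with $w^*w=1$, hence is a unitary of $\N$ by finiteness of the factor. Substituting $w'=\beta w$ into $v_g\theta_g(w')=\la_g w' u_g$ and cancelling $\beta$ gives $v_g=\la_g w u_g\theta_g(w^*)$, the required cohomology.

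Finally, I would track the constant: $\Vert w'\Vert_2=\beta$ together with $\Vert w'-1\Vert_2\leq\delta/2$ gives $|\beta-1|\leq\delta/2$, and then
$$\Vert w-1\Vert_2 \leq \Vert w'/\beta-w'\Vert_2+\Vert w'-1\Vert_2 = |1-\beta|+\Vert w'-1\Vert_2 \leq \delta,$$
which is the stated estimate. I expect the main obstacle to be exactly this unitarization: Theorem~\ref{Theo-PopaEtAl} only delivers an $L_2$-eigenvector, and the full force of the ergodicity hypothesis, together with the finiteness of $\N$ (so that an isometry is automatically a unitary), is needed to promote it to a genuine unitary in $\N$. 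The bookkeeping of the constants $224=4\cdot 56$ and the split $\delta=\delta/2+\delta/2$ is precisely what makes the estimate come out as stated.
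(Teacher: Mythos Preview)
Your proof is correct and follows essentially the same approach as the paper's: define a projective unitary representation on $L_2(\N)$ built from $u$, $v$, and $\theta$, observe that the unit $1$ is almost invariant, apply Theorem~\ref{Theo-PopaEtAl} with parameter $\delta/2$ to obtain an eigenvector, and then use ergodicity of $g\mapsto \Ad(u_g)\theta_g$ together with finiteness of $\N$ to unitarize it. The only cosmetic difference is that the paper defines $\pi(g)x = u_g\theta_g(x)v_g^*$ (swapping the roles of $u$ and $v$ relative to yours), obtains an eigenvector $b$, shows $bb^*$ is invariant, and sets $w=b^*/\sqrt{\beta}$; this yields $v_g=\overline{\la_g}\,w u_g\theta_g(w^*)$, which is the same conclusion up to renaming the phase map.
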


 \begin{proof}
We  adapt the proof from \cite[Theorem~7]{Kawa}, making it quantitative at the appropriate places. 
Let $v \in \U_{\delta}.$ For every $g\in\Ga,$
let $\pi(g)$ be the unitary operator  on $L_2(\N)$ given by 
$$
\pi(g) x= u_g \theta_g(x) v_g^* \tout x\in \N.
$$
Since $u$ and $v$ are projective $1$-cocycles for $\theta,$ 
the mapping $\pi: g\mapsto \pi(g)$ is a projective unitary representation of $\Ga,$ as is easily checked.
Let $w: \Ga\times \Ga\to   \mathbf{S}^1$ be the corresponding $2$-cocycle.
Observe that $1\in L_2(\N)$ is a unit vector which is $(S,\eps\delta^2/224)$-invariant.
Hence, it follows from Theorem~\ref{Theo-PopaEtAl} that there exists
a mapping $\la: \Ga\to \mathbf{S}^1$ with 
$$\mu_{g,h} =\la_g \la_h \overline\la_{gh} \tout g,h\in \Ga$$
  and a vector $b\in L_2(\N) $  such that  
 $\Vert b-1\Vert \leq\delta/2$ and  $\pi(g) b= \la _g b$ for all $g\in \Ga.$
Thus, $b\neq 0$ and $u_g\theta_g(b) v_g^*=\la_gb$ for every $g\in\Ga.$  
We view $b$ as a densely defined operator on $L_2(\N)$  affiliated
to $\N.$  Taking adjoints, we see that the positive operator $bb^*\in L^1(\N)$ is fixed by
the extension to $L^1(\N)$ of $\Ad(u_g) \theta_g$  for every
$g\in \Ga$. Since $g \mapsto \Ad(u_g)\theta_g $ is ergodic, it
follows that $bb^*= \beta 1$ for some $ \beta>0$
Then $w:= b^*/\sqrt{\beta}$ is a unitary element in $\N$ such that 
$$v_g= \overline{\la_g}wu_g\theta_g(w^*) \tout g\in\Ga.$$
Moreover, as in the proof of Proposition~\ref{Pro-Kazhdan1}, we have 
$\Vert w- 1\Vert_2\leq  \delta.\, \bsq$
 \end{proof}

One can improve upon  the constant defining $\U_\delta$  in the previous proposition, when 
one deals with  genuine $1$-cocycles instead of projective ones; 
indeed, in this case, the projective unitary representation 
appearing in the proof is a true  unitary representation
and one checks that the following statement holds.

\begin{proposition}
\label{Prop-Kawa2}
Let $\Ga$,  $(S, \eps)$,  $\N$ and $\theta$ be as 
in Proposition~\ref{Pro-Kawa}.  
Let $u:\Ga\to \UU(\N)$ be a  $1$-cocycle  for $\theta$
such that   $g\mapsto \Ad(u_g)\theta_g$
is ergodic. For  $0<\delta<1,$ set 
$$\U_{\delta} = \{ v\in Z^1(\Ga, \theta) : \,  \Vert  v_s- u_s \Vert_2\leq \eps\delta/2  \ \text{for all}\ s \in S\}.$$
Then, for  every $v\in \U_{\delta},$ there exists $w\in \UU(\N)$ 
with   $\Vert w- 1\Vert_2 \leq \delta$  such that 
$$ v_g= w u_g\theta_g(w^*)\tout g\in \Ga. \, \bsq$$
\end{proposition}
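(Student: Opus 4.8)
The plan is to follow the proof of Proposition~\ref{Pro-Kawa} almost verbatim, the one change being that the \emph{genuine} cocycle hypothesis turns the projective unitary representation used there into an honest one, so that the projective rigidity Theorem~\ref{Theo-PopaEtAl} can be replaced by the sharper quantitative form of Property~(T) recalled in Section~\ref{SS:Kazhdan}. For $v\in\U_\delta$ I would set, for each $g\in\Ga$, $\pi(g)x=u_g\theta_g(x)v_g^*$ for $x\in\N$; as $u_g,v_g$ are unitaries and $\theta_g$ is trace preserving, $\pi(g)$ extends to a unitary of $L_2(\N)$. The first and only substantive new step is to check, using $\theta_g\theta_h=\theta_{gh}$ together with the genuine identities $u_g\theta_g(u_h)=u_{gh}$ and $v_g\theta_g(v_h)=v_{gh}$, that $\pi(g)\pi(h)=\pi(gh)$ with no scalar correction; thus $\pi$ is a genuine unitary representation of $\Ga$.

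Next I would feed in the constant vector $1\in L_2(\N)$. Since right multiplication by the unitary $v_s$ is an $L_2$-isometry, $\Vert\pi(s)1-1\Vert_2=\Vert u_sv_s^*-1\Vert_2=\Vert u_s-v_s\Vert_2\le\eps\delta/2$ for all $s\in S$, so $1$ is $(S,(\delta/2)\eps)$-invariant. This is exactly where the constant improves: because $\pi$ is genuine I may invoke the quantitative Property~(T) statement of Section~\ref{SS:Kazhdan} (Proposition~1.1.9 in \cite{BHV}) directly, which asks only for invariance of order $(\delta/2)\eps$ --- linear in $\delta$ --- instead of the order-$\delta^2$ threshold forced by the projective Theorem~\ref{Theo-PopaEtAl}. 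It produces a genuine $\Ga$-invariant vector $b\in L_2(\N)$ with $\Vert b-1\Vert_2\le\delta/2$; in particular $b\neq0$ and $u_g\theta_g(b)v_g^*=b$ for every $g$.

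The remaining step is the ergodicity argument of Proposition~\ref{Pro-Kawa}, unchanged. Viewing $b$ as an operator affiliated with $\N$ and taking adjoints in $u_g\theta_g(b)=bv_g$ gives $\Ad(u_g)\theta_g(bb^*)=bb^*$ for all $g$, so ergodicity of $g\mapsto\Ad(u_g)\theta_g$ forces $bb^*=\beta 1$ for some $\beta>0$; then $w:=b^*/\sqrt\beta$ satisfies $w^*w=1$, hence is unitary since $\N$ is a finite factor, and substituting $b=\sqrt\beta\,w^*$ yields $v_g=wu_g\theta_g(w^*)$ --- the absence of the scalars $\la_g$ of Proposition~\ref{Pro-Kawa} being precisely the reflection of $b$ being a genuine invariant vector. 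The bound $\Vert w-1\Vert_2\le\delta$ then follows as in Proposition~\ref{Pro-Kazhdan1}, from $\Vert b-1\Vert_2\le\delta/2$ and $\vert1-\sqrt\beta\vert=\vert\,\Vert1\Vert_2-\Vert b\Vert_2\,\vert\le\delta/2$. I expect no real obstacle here: the content is entirely that of Proposition~\ref{Pro-Kawa}, and the only points needing care are the verification that $\pi$ is genuine (which licenses bypassing Theorem~\ref{Theo-PopaEtAl}) and the bookkeeping confirming that the linear threshold $(\delta/2)\eps$ propagates to $\Vert w-1\Vert_2\le\delta$.
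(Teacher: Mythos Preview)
Your proposal is correct and follows exactly the approach the paper indicates: the paper itself merely remarks that, since $u$ and $v$ are genuine $1$-cocycles, the representation $\pi(g)x=u_g\theta_g(x)v_g^*$ is a true unitary representation, so the quantitative form of Property~(T) from Section~\ref{SS:Kazhdan} replaces Theorem~\ref{Theo-PopaEtAl} and the rest of the proof of Proposition~\ref{Pro-Kawa} goes through verbatim with the sharper linear constant. Your bookkeeping for the invariant vector $b$, the ergodicity step yielding $bb^*=\beta 1$, and the bound $\Vert w-1\Vert_2\le\delta$ are all in order.
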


 \section{Proof of Theorem~\ref{Theo1}}
 \label{S:ProofTheo1}
Let  $\pi: \Ga\to \OO(L_p(\N))$ be a   group homomorphism for $p\neq 2.$
Then $\Phi_{p,2}\circ \pi$ is a group homomorphism from $\Ga$ to the
$\N$-unitary group $\OO_{\N}(L_2(\N))$  as defined in Section~\ref{SS:Isometries}, where
$\Phi_{p,2}:  \OO(L_p(\N))\to  \OO_{\N}(L_2(\N))$ is the identity mapping.
 By Proposition~\ref{Pro-Mazur}, $\OO(L_p(\N))$
and $\OO_{\N}(L_2(\N))$ are topologically isomorphic groups.
Hence,  
to prove that $\pi: \Ga\to \OO(L_p(\N))$ is locally rigid amounts to prove
that $\Phi_{p,2}\circ\pi: \Ga\to \OO_{\N}(L_2(\N))$
is locally rigid. So, we can  replace $\pi$ by $\Phi_{p,2}\circ \pi.$  

Set $\Ga^+:= \pi^{-1}(\OO^+_{\N}(L_2(\N));$ 
then $\Ga^+$ is a normal  subgroup 
of index at most 2 in $\Ga.$

 Let $\pi=(u, \theta)$  be the Yeadon decomposition
of $\pi.$ 
  Recall that the associated  homomorphisms $\pi^l, \pi^r\in\Hom (\Ga^+, \Aut(\N))$ are given by
$$\pi^l(g)= \theta_g \qquad \text{and}\qquad \pi^r(g)= \Ad(u_g)\theta_g$$
for every $g\in\Ga^+.$

Assume now that $\Ga$  and $\N$ have both Property (T) and that 
 $\pi^l$ and  $\pi^r$  are ergodic. 

We first prove  Theorem~\ref{Theo1}  in  the case where $\pi$ takes its values in $\OO^+_{\N}(L_2(\N))$
and will then reduce the general case to this situation.

\subsection{{The case $\Ga= \Ga^+$}} 
In this case,  $\theta\in \Hom (\Ga, \Aut(\N))$ and $u\in Z^1(\Ga,\theta);$ see Section~\ref{SS:Rep}.
Recall that, by  Corollary~\ref{Cor-Yeadon} and Proposition~\ref{Pro-Mazur},
 $\OO^+_\N(L_2(\N))$ is topologically isomorphic to the topological semi-direct product $\UU(\N)\rtimes {\Aut}(\N).$

Let $(S, \eps)$ be  a Kazhdan pair for $\Ga$  and 
$(F, \eps')$ a Kazhdan pair for  $\N$. We can assume that $S$ is a  generating set for $\Ga,$
since $\Ga$ is finitely generated (see Theorem~1.3.1 in \cite{BHV}).
Moreover, since $\Aut(\N)$ is open in $\ExAut(\N),$ upon enlarging  $F$ 
and  reducing $\eps'$ if necessary, we  can also assume
that $$\max_{x\in F}  \Vert\tau(x)- x\Vert_2 >\eps' $$
 for every anti-automorphism $\tau$ of $\N.$

 Fix $0<\delta <1$ and define $\V_\delta $  to be the neighbourhood  of $\pi$ in $\Hom (\Ga, \OO_\N(L_2(\N)))$
 consisting of all $\rho\in  \Hom (\Ga, \OO_\N(L_2(\N)))$ with Yeadon decomposition $\rho=(v, \alpha)$ such that 
 \begin{align*}
 &(1)\ \ \Vert v_s-u_s\Vert_2 \leq \delta\eps/4 \ \ \text{for all}\  s\in S\ \text{and}\\
 &(2)\ \ \Vert\alpha_s(x)- \theta_s(x)\Vert_2\leq \, \frac{\delta^2\eps^3\eps'}{28672}=\frac{\delta^2\eps^3\eps'}{2^7\cdot 224}  \ \ \text{for all}\  s\in S\ \text{and all} \ x\in F.
 \end{align*}

Let $\rho\in \V_\delta$ with Yeadon decomposition $\rho=(v, \alpha).$ 
Since, by (2),
$$ \max_{x\in F} \Vert(\theta_s^{-1}\alpha_s)(x)- x \Vert_2= \Vert\alpha_s(x)- \theta_s(x)\Vert_2\leq \eps',$$
it follows that $\alpha_s\in \Aut(\N)$ for all $s\in S$ and hence $\alpha$ takes its values in  $\OO^+_\N(L_2(\N)).$
Hence, $\alpha\in \Hom (\Ga, \Aut(\N))$ and $v\in Z^1(\Ga,\alpha).$

\bigskip
\noindent
\textbf{Claim.} There exist $a, b\in \UU(\N)$ with 
$$\Vert a-1\Vert_2\leq \delta \, \, \text{and} \, \,  \Vert b-1\Vert_2\leq \delta$$
 such that 
$$
\alpha_g=\Ad(a)\theta_g\Ad(a^*) \,\, \, \text{and} \, \,\, v_g =b au_ga^* (\Ad(a) \theta_g\Ad(a^*))(b^*)
$$
for all $g\in \Ga.$
In particular,  once proved, this claim  will show that $\rho$ is in the $\OO^+_\N(L_2(\N))$-orbit of 
$\pi$ (see Proposition~\ref{Pro-Mazur2}). 
The proof will be carried out in four steps.

\bigskip
\noindent
$\bullet$ {\it First  step:} We claim that
there exists a projective $1$-cocycle $w$ in $Z^1_{\rm proj}(\Ga, \theta)$ with the following properties:
\begin{align*}
&\alpha_g=\Ad(w_g)\theta_g \quad\text{for all}\quad g\in \Ga \quad\text{and} \\ 
 &\Vert w_s-1\Vert_2 \leq \frac{\delta^2\eps^3}{2^6\cdot 224}  \quad\text{for all}\quad s\in S.
 \end{align*}
Indeed, by (2) above,  for every $s\in S$ and $x\in F,$ we have
$$
\Vert\theta_s^{-1}(\alpha_s(x))- x\Vert_2= \Vert\alpha_s(x)- \theta_s(x)\Vert_2\leq \frac{\eps'\delta^2\eps^3}{2^7\cdot 224}.
$$ 
Hence, it follows from Proposition~\ref{Pro-Kazhdan1} that, for every $s\in S,$ there exists $w_s$ in $\UU(\N)$ 
with 
$$\Vert w_s-1\Vert_2 \leq \frac{\delta^2\eps^3}{2^6\cdot 224}$$
and such that $\alpha_s=\Ad(w_s)\theta_s.$ 

Now,  $\alpha$ and $\theta$ are group homomorphisms from $\Ga$ to $\Aut(\N)$ 
and  $\Inn(\N)$ is a normal subgroup  in $\Aut(\N).$ Moreover,  we have just shown that
the homomorphisms $p\circ \alpha$ and $p\circ \theta$ from $\Ga$ to 
the quotient group $\Aut(\N)/\Inn(\N)$ agree on  the generating set $S$, where 
$$p: \Aut(\N)\to \Aut(\N)/\Inn(\N)$$
is the canonical projection. It follows that  $p\circ \alpha= p\circ \theta$ on $\Ga.$
Hence, we can extend $S\mapsto \UU(\N), s\mapsto w_s$ to a mapping 
$w:\Ga\mapsto \UU(\N)$ such that $\alpha_g=\Ad(w_g)\theta_g$ for all $g\in \Ga.$
By Lemma~\ref{Lem-ProjCocy},  $w$  is a projective $1$-cocycle for $\theta.$ 

\medskip
\noindent
$\bullet$ {\it Second step:}  We claim that   there exist $a\in \UU(\N)$ and a mapping $\la: \Ga\to {\mathbf S}^1$ such that 
 such that 
$$w_g= \la_g a \theta_g(a^*) \tout g\in \Ga$$ 
(that is, $w$ is in  $B^1_{\rm proj}(\Ga, \theta)$) and such that 
$$\Vert a-1\Vert_2 \leq \frac{\delta\eps}{2^3}.$$
  Indeed, the action of $\Ga$
given by $\pi^l=\theta$ is ergodic and 
$$
\Vert w_s-1\Vert_2 \leq  \frac{\delta^2\eps^3}{2^6\cdot  224}=\left(\frac{\delta\eps }{2^3}\right)^2\frac{\eps}{224} \tout s\in S.
$$ 
for every $s\in S.$ Hence, the claim  follows from  Proposition~\ref{Pro-Kawa} applied to 
the trivial cocycle $u: g\mapsto 1.$

\medskip
\noindent
$\bullet$ {\it Third step:}  Let $a\in \UU(\N)$ be as in the second step. We claim that 
$\alpha=\theta^{\Ad(a)},$ that is, 
$$\alpha_g=\Ad(a)\theta_g \Ad(a^*) \tout g\in \Ga.$$
 Indeed, this follows from the fact that $\alpha_g=\Ad(w_g)\theta_g$  and 
$w_g= \la_g a\theta_g(a^*)$ for every $g\in \Ga$.

\bigskip
Let $u'=\Ad(a)u$ be the cocycle in $Z^1( \Ga, \alpha) = Z^1( \Ga, \theta^{\Ad(a)}) $
defined by 
$$u'_g= a u_g a^* \tout g\in\Ga.$$

\medskip
\noindent
$\bullet$ {\it Fourth step:}   We claim that there exists $b\in \UU(\N)$ 
with   $\Vert b- 1\Vert_2 \leq \delta$  such
$$ v_g= b u'_g\theta_g^{\Ad(a)}(b^*)\tout g\in \Ga.$$
Indeed,  by (1) above and the choice of $a$, we have
\begin{align*}
\Vert v_s-u'_s\Vert_2 &=\Vert v_s- a u_s a^* \Vert_2\leq  \Vert v_s- u_s\Vert_2 + \Vert au_s- u_sa\Vert_2 \\
& \leq \Vert v_s- u_s\Vert_2 + \Vert (a-1)u_s\Vert_2 + \Vert u_s(a-1)\Vert_2 \\
&\leq   \Vert v_s- u_s\Vert_2+ 2\Vert a-1\Vert_2 \\
&\leq \frac{\delta\eps}{4}+ 2 \frac{\delta\eps}{8}=\frac{\delta\eps}{2},
\end{align*}
for every $s\in S.$ 
Moreover, the action of $\Ga$
given  by $\pi^r(g)= \Ad(u_g) \theta_g$ for $g\in\Ga$  is ergodic.
Hence, the action 
given by $g\mapsto \Ad(u'_g)\alpha_g=\Ad(a) \pi^r(g) \Ad(a^*)$  is also ergodic.
The   claim  follows now from  Proposition~\ref{Prop-Kawa2}.

\subsection{{The case $\Ga\neq \Ga^+$}}

We assume now that $\Ga^+= \pi^{-1}(\OO^+_{\N}(L_2(\N))$ is a proper subgroup 
and hence a normal subgroup of index 2 in $\Ga.$ 
Observe that  $\Ga^+$ has also Property (T). Let $(S, \eps)$ be  a Kazhdan pair for $\Ga^+$.

 Since $\Inn(\N)$ is open in $\Aut(\N),$ the set  
 $$
\U =\{\varphi=(u, \Ad(v)) \in \OO^+_{\N}(L_2(\N)) \ : \ \Vert u -1\Vert_2<\sqrt{3}/4\quad\text{and}\quad \Vert v-1\Vert_2<\sqrt{3}/4\}
$$
is an open neighbourhood
of the identity in $\OO^+_{\N}(L_2(\N)).$

Fix $s_0\in \Ga\setminus \Ga^+.$
Let $0<\delta<\sqrt{3}/20.$ Define $\V=\V_{\delta}$  to be the neighbourhood  of $\pi$ in $\Hom (\Ga, \OO_\N(L_2(\N)))$
consisting of all $\rho=(v, \alpha)$ in  $\Hom (\Ga, \OO_\N(L_2(\N)))$ such that the conditions (1) and (2) from above hold
 and such that, moreover, 
 \begin{align*}
 (3)\qquad \rho({s_0}) \in \U \pi({s_0}).
\end{align*}

Let $\rho\in \V.$
We can apply the conclusion of the first case to the restrictions $\pi|_{\Ga^+}$ and $\rho|_{\Ga^+}$ of $\pi$ and $\rho$
to  $\Ga^+$ and conclude that there exists $U= (a, \Ad(b))$ in $\OO^+_{\N}(L_2(\N))$ with 
unitaries $a$ and $b$ in $\UU(\N)$ which are
 $\delta$-close to $1$ in the $L^2$-norm and such that $\pi|_{\Ga^+}= \Ad(U)(\rho|_{\Ga^+}).$

We claim that $\pi= \Ad(U)\rho.$ 
Indeed, set $\beta:= \Ad(U)\rho.$  By (3), there exists $\varphi_1=(a_1, \Ad(b_1))\in\U$ such that  
$\rho(s_0)= \varphi_1\pi(s_0).$ Hence, 
$$\beta(s_0)= U \varphi_1\pi(s_0)U^{-1}=U\varphi_1(\pi (s_0)U^{-1}\pi (s_0)^{-1}) \pi (s_0).$$
Set $\varphi_2: =\pi (s_0)U^{-1}\pi (s_0)^{-1}.$  One checks that
$\varphi_2 =(a_2, \Ad(b_2))$ for unitaries  $a_2$ and $b_2$ which are $4\delta$-close to $1,$
since $a$ and $b$ are $\delta$-close to $1$  in the $L^2$-norm.
Set $\varphi :=U\varphi_1 \varphi_2,$ so that $\beta(s_0)= \varphi\pi(s_0).$ 
Then $\varphi=(c, \Ad(d))$ 
for unitaries $c$ and $d.$ 
Since  $a_1$ and $b_1$ are $\sqrt{3}/4$-close to 1, 
since $a_2$ and $b_2$  are $4\delta$-close to $1,$
 and since $\delta <\sqrt{3}/20$, one checks that $c$ and $d$
  are $\sqrt{3}/2$-close to $1$ in the $L^2$-norm.
 
Using the fact that $\beta$ and $\pi$ are homomorphisms on $\Ga$
and coincide  on the normal subgroup $\Ga^+,$ 
 we have, for every $g\in \Ga^+,$
\begin{align*}
 \pi(s_0 g s_0^{-1})&=\beta(s_0 g s_0^{-1}) =\beta(s_0)\beta(g) \beta(s_0^{-1})= \varphi \pi(s_0 g s_0^{-1}) \varphi^{-1}.
\end{align*}
So, $\varphi$ commutes with $\pi(g)$ for all $g\in \Ga^+.$

The condition that $\varphi=(c, \Ad(d))$ commutes with $\pi(g)=(u_g, \theta_g)$   
means that 
$$
 cd u_g \theta_g(x) d^*= u_g \theta_g (cdxd^*) \, \, \, \text{for all} \, \, x\in \N. \, \, \, (*)
$$
Taking adjoints, we deduce that
$$
d\theta_g(x^* x) d^*=  \theta_g(dx^* xd^*), 
$$
that is,  $\theta_g (d^*)d$ commutes with $\theta_g(x x^*)$ for every 
$x\in \N.$ Since $\N$ is a factor, it follows that, for every $g\in \Ga^+,$ we have 
$\theta_g (d^*) d= \la_g 1$ 
for some scalar $\la_g$  with $|\la_g|=1.$
Using the fact that $g\mapsto \theta_g $ is a group homomorphism, we see that
$g\mapsto \la_g$ is a unitary character of $\Ga^+.$ 

Since $d$ is $\sqrt{3}/2$-close to 1, we have
$ |\la_g-1| < \sqrt{3}$ for all $g\in \Ga^+.$ 
As is well-known, this implies that $\la_g=1$ for all $g\in \Ga^+$ (indeed, the only subgroup
$G$ of the unit circle with $|z-1| < \sqrt{3}$ for all $z\in G$ is the trivial subgroup).

So, $d^*$ is fixed by the  automorphisms  $\theta_g $ for $g\in \Ga^+$ and hence
$d= \la 1$ for some scalar $\la$  with $|\la|=1,$ by ergodicity of $\pi^l.$ 
Hence, $\Ad(d)$ is the identity and we can assume that $d=1.$

From $(*)$, we then obtain that $c$ is fixed by the  automorphisms  $\pi^r(g)= u_g\theta_g u_g^*$
for $g\in \Ga^+$  and so $c=  \la 1$ for some scalar $\la$  with $|\la|=1,$ by ergodicity of $\pi^r.$ 
Hence, $\beta(s_0)= \la \pi(s_0).$ 
Since $s_0^2\in \Ga^+$ and therefore $\beta(s_0^2) = \pi(s_0^2),$ we see that   $\la^2= 1.$ 
As $\Vert c-1\Vert_2 <2,$
it follows that $\la=1,$  that is,  $\beta(s_0)=\pi(s_0).$
Hence, $\beta= \pi$ and
 the  proof of the theorem is complete.
$\bsq$

\begin{remark}
\label{Rem-Quantitive}
Let $\pi: \Ga\to \OO^+_{\N}(L_2(\N))$ be a homomorphism.
For a fixed $0<\delta <1,$ the  set $\V_\delta$ given above is a neighbourhood of $\pi$ in $\Hom(\Ga, \OO^+_{\N}(L_2(\N)))$
 such that every $\rho\in \V_\delta$ is conjugate to $\pi$ by some $U=(b, \Ad(a))$ in $\OO^+_{\N}(L_2(\N))$
 for which $a$ and $b$ are  $\delta$-close to the identity in the $L_2$-norm.
 
Using estimates for the Mazur map $M_{p, q}$,  we 
 can determine a neighbourhood $\V_{\delta, p}$ of $\pi$ in $\Hom (\Ga, \OO^+(L_p(\N)))$ 
 for $p\neq 2,$ with the same properties.
 Indeed, by  \cite{Ricard},  there exist constants $C=C_p$ (of order $p$)  and $D$ 
 (independent of $p$) such that 
 \begin{align*}
 \Vert M_{p,2}(x)-   M_{p,2}(y)\Vert_2  &\leq C\Vert x- y\Vert_p^{\alpha}\\
  \Vert M_{2,y} (x')-   M_{2,p}(y')\Vert_p  &\leq D \Vert x'- y'\Vert_2^{\beta},\\
   \end{align*}
where $\alpha=  \min\{ p/2, 1\}$ and $\beta= \min\{ 2/p, 1\},$
 for $x, y$ in the unit ball in $L_p(\N)$ and  $x', y'$ in the unit ball in $L_2(\N).$

 We can clearly assume that, for every $x$ in the Kazhdan set $F$ for $\N,$
 we have  $\Vert x\Vert_2=1$ and hence $\Vert M_{2,p}(\theta(x))\Vert_p=1$
 for every $\theta \in \Aut(\N),$  since  $\Vert M_{2,p}(\theta(x))\Vert_p =
 \Vert \theta (M_{2,p}(x))\Vert_p=\Vert x\Vert_2^{2/p}.$

 Set    $\delta_p= \left(\frac{\delta}{D}\right)^{1/\beta}$ and
 $$ \eps_p=\left(\frac{ \delta_p\eps}{4 C}\right)^{1/\alpha}, \, \, \, \,
 \eps_p'= \left(\frac{\delta_p^2\eps^3\eps'}{C\cdot 2^7\cdot 224}\right)^{1/\alpha}.
 $$
Let  $\V_{\delta_p}$ to be the neighbourhood  of $\pi=(u,\theta)$ in $\Hom (\Ga,\OO^+(L_p(\N))$
 consisting of all $\rho=(v, \alpha)$ in  $\Hom (\Ga,\OO^+(L_p(\N))$ such that 
 \begin{align*}
 &\ \ \Vert v_s-u_s\Vert_p \leq \eps_p \ \ \text{for all}\  s\in S\ \text{and}\\
 &\ \ \Vert\alpha_s(M_{2,p}(x))- \theta_s(M_{2,p}(x))\Vert_p\leq \,\eps'_p   \ \ \text{for all}\  s\in S\ \text{and all} \ x\in F.
 \end{align*}
Then  every $\rho\in \V_{\delta_p}$ is conjugate to $\pi$ by some $U=(b, \Ad(a))$ in $\OO^+(L_p(\N))$
 for which $a$ and $b$ are  $\delta$-close to the identity in the $L_p$-norm.
\end{remark}

\section{On the assumptions in the statement of Theorem~\ref{Theo1}}
\label{S:Comments}

We present counterexamples in relation with  the various assumptions
made in Theorem~\ref{Theo1}.
\begin{example} 
\label{Exa1}
If the finite factor $\N$ does not have Property (T), the conclusion of Theorem~\ref{Theo1}
may not be  true. Indeed, let $\R$ be the hyperfinite type $II_1$-factor.  
M. Choda constructed in \cite{Choda} a continuous  family $(\theta_t)_{t\in [0,1]}$ of  actions of 
the  group $\Ga=SL_n(\ZZ)$ for $n\geq 2$ (recall that $SL_n(\ZZ)$ has
Property (T) for $n\geq 3$) by automorphisms on  $\R$,
which are ergodic and mutually non conjugate in ${\rm Aut} (\R)$ for irrational $t.$ 
It follows from Proposition~\ref{Pro-Mazur2}  that,
 for any $1\leq p <\infty, p\neq 2$ and any irrational $t$, the homomorphisms  $\pi_t: \Ga \to \OO^+(L_p(\R))$ defined by these actions are   mutually non conjugate in $\OO^+(L_p(\R))$ and hence are not locally rigid.
In fact, a more general result   in \cite[Corollary 0.2]{Popa} implies that  any Kazhdan group admits  a continuous family of actions by automorphisms on  $\R$ which are mutually non conjugate.
\end{example} 

\begin{example} 
\label{Exa2}
The conclusion of Theorem~\ref{Theo1} might also fail, if  
 any one of the associated actions $\pi^l$ and  $\pi^r$ by automorphisms  of $\N$
  is not ergodic.  
 A counter-example may be obtained by a slight modification
 of Example 8 in \cite{Kawa} as follows. 
 
 Let $H$ be an ICC group   with Kazhdan's property (for instance $H= SL_3(\ZZ)$).
  Set $\Ga= H\times H$ and $\N= \N(\Ga).$ Then $\Ga$ is an ICC group with  Kazhdan's property  and $\N$
 can be identified with  the  tensor product  $\N(H)\overline{\otimes} \N(H)$ of von Neumann algebras,
 with trace $\tau= \tau_H\otimes \tau_H,$ where $\tau_H$ is the canonical trace on $\N(H).$
 For $1\leq p <\infty, p\neq 2,$ let $\pi_0$ denote the embedding  $ \Ga \to \OO^+(L_p(\N))$ given by $ g \mapsto \la(g)$;
  observe that the associated action $\pi^l_0$ is the trivial action, while   the action $\pi^r_0$, which is given by 
  $ g \mapsto \Ad(\la(g)),$  is  ergodic. 
  
  Since $\N(H)$ is a factor   of type $II_1,$ for every $t\in [0,1]$, there exists a projection $p_t$ in $\N(H)$  with $\tau_H (p_t)=t.$ 
   For $g=(h_1, h_2)\in \Ga$ and $t\in [0,1]$, let $u_{g}^{(t)}\in \N(\Ga)$ be defined by
 $$ u_g^{(t)}= \la(h_1) \otimes p_t + 1 \otimes (1-p_t).$$
  Then $u_{g}^{(t)}$ is unitary and 
  $$u^{(t)}: \Ga\to \UU(\N), \, g\mapsto u_{g}^{(t)}$$
   is a group homomorphism.
 For $g\in \Ga,$ let  $\pi_t(g)$ denote the isometry of  $L_p(\N)$  with Yeadon decomposition $(\la(g) u_{g^{-1}}^{(t)}, \Ad(u_g^{(t)})),$ that is,
  $$ \pi_t(g) x= \la(g)u_{g^{-1}}^{(t)}\Ad(u_g^{(t)})(x)=  \la(g) x  u_{g^{-1}}^{(t)} \tout  x\in L_p(\N).$$
 Then $\pi_t: \Ga\to \OO^+(L_p(\N))$ is a group homomorphism.
 
  We claim that $\pi_0$ is not locally rigid. For this, it suffices to show (see Proposition~\ref{Pro-Mazur})
  that  $\pi_0$ is not locally rigid when viewed as homomorphism with values in 
  the $\N$-unitary group $\OO^+_{\N}(L_2(\N)$.
 
  For $g=(h_1, h_2)\in \Ga$ and $t\in [0,1],$ we have
  $$
  \Vert u_g^{(t)}- 1\Vert_2^2= 2(1- \tau (u_g^{(t)}))= 2t(1-\tau_H(\la(h_1)) \leq 2t
  $$
 and hence  $\lim_{t\to 0}\Vert u_g^{(t)}- 1\Vert_2=0.$ 
 It follows that  $\lim_{t\to 0}\pi_t(g)=\pi_0(g)$ in $\OO^+_{\N}(L_2(\N))$ for every $g\in \Ga$.
 
  Assume, by contradiction,  that $\pi_0$ is locally rigid.
  Then, by Proposition~\ref{Pro-Mazur2}, 
  for $t>0$ sufficiently small,  $ \Ad(u_g^{(t)})$ is conjugate to the trivial automorphism $x\mapsto x$ in $\Aut (\N)$
  and hence $ \Ad(u_g^{(t)})$ is  the trivial automorphism for every $g\in \Ga.$ This is a contradiction, as 
  $u_g^{(t)}$ is not a scalar multiple of the identity for $g=(h_1,h_2)$ with $h_1\neq e.$

 Similarly, one can show that the embedding $\rho_0:\Ga \to \OO^+(L_p(\N)),$ given by 
 $$ \rho_0(g): x \mapsto x\la(g^{-1})= \la(g^{-1}) \Ad(\la(g))(x),$$
 is not locally rigid; here,  it is 
  the associated action $\rho^l_0$ which  is  ergodic, while $\rho^r_0$ is the trivial action.
\end{example} 

\begin{example} 
\label{Exa3}
 The conclusion of Theorem~\ref{Theo1} does not hold in general  for actions by isometries on the classical 
 (commutative) $L_p$-spaces. We give a counter-example
 for actions on the sequence space $\ell_p$ (more involved counter-examples
 can be found for actions on the space $L_p[0,1]$).
 
 Let    $\Ga$ be an arbitrary group which is not a torsion group; thus $\Ga$  contains
  a subgroup $A$ isomorphic to $\ZZ.$
 There exists a family $(\chi_t)_{t\in [0, 1]}$
 of unitary characters $\chi_t$ of  $A$ with $\chi_t\neq 1$ for $t\neq 0,$
 $\chi_0= 1$  and such that  $\lim_{t\to 0} \chi_t (a)=1$ for all $a\in A.$
 Choose a set of representatives $X$ for the left cosets of $\Ga$ modulo $A$
 with $e\in X;$ so,
 $\Ga= XA.$ Let $c: \Ga\times X\to A$ be the cocycle defined
 by 
 $$
 g x \in Xc(g, x) \tout  g\in\Ga,\, x\in X.
 $$
 We transfer the natural  $\Ga$-action on $\Ga/A$  to an action $(g, x)\mapsto g(x)$ of $\Ga$ on $X\cong\Ga/A$, by setting
 $$
 g(x)= gx c(g, x)^{-1} \tout  g\in\Ga,\, x\in X.
 $$
   For every $t\in \RRR$  and every $p\in [1,+\infty[,$ 
   the  operator $\pi_{t}(g)$  on 
$\ell_p(X)$, defined by 
$$
\pi_{t}(g)f(x)=\chi_t(c(g^{-1},x))f(g^{-1}(x)) \tout  f\in \ell_p(X), x\in X,
$$
is an isometry and $\pi_t: \Ga\to \OO(\ell_p(X))$ is a homomorphism.
(For $p=2,$ $\pi_t$ is the  unitary representation of $\Ga$ induced by the character $\chi_t$
of $A.$  Observe also that $\pi_0$ is the quasi-regular representation of $\Ga$
in $\ell_p(X)\cong \ell_p(\Ga/A).$ ) 

 We have $\lim_{t\to 0}\pi_t(g)= \pi_0(g)$ 
in $\OO(\ell_p(X)),$ for every $g\in \Ga.$ Indeed,
by linearity and density,  it suffices to check
that 
$$\lim_{t\to 0}\Vert \pi_t(g) \delta_x -\pi_0(g)\delta_x \Vert_p=0,$$
 where $\delta_x$ is the Dirac function at $x\in X.$
This is  the case, since
$$
\Vert \pi_t(g) \delta_x -\pi_0(g)\delta_x \Vert_p= |\chi_t(c(g^{-1},g(x))-1|.
$$

Let  $t\neq 0$  and $p\neq 2.$ We claim that  $\pi_t$ does not belong to the $\OO(\ell_p(X))$-orbit 
of $\pi_0.$ 
  Indeed, assume, by contradiction, that there exists  $U=U_t$ in $\OO(\ell_p(X))$ such that
 $$\pi_t(g)= U\pi_0(g) U^{-1}\tout g\in \Ga.$$
 By Banach characterization  of the isometries of $\ell_p(X)$ from \cite[Chap. XI]{Banach},
 there exists a function $\alpha: X\to \mathbf S^1$ and a bijective mapping $\varphi: X\to X$
 such that   
 $$
(Uf)(x)=\alpha(x)f(\varphi (x)) \tout  f\in \ell_p(X),x\in X.
$$
 One computes that $U\pi_t(g^{-1}) U^{-1}=\pi_0(g^{-1}) $ amounts to the equation
  \begin{align*}
& \alpha(x)\alpha(\varphi^{-1} g \varphi(x))^{-1} \chi_t(c(g, \varphi(x)))f( \varphi^{-1} g \varphi (x))= f(g(x)),
  \end{align*}
 for all $ f\in \ell_p(X)$ and $x\in X.$
 It follows from this that $\varphi^{-1} g \varphi = g$ on $X$ and, consequently, 
 $$
\alpha(x)\alpha( g(x))^{-1}\chi_t (c(g, \varphi(x))=1 \tout g\in\Ga, \, x\in X.\qquad \qquad (*)
 $$
 Let  $x=\varphi^{-1}(e)$ and $g\in A.$ Then, $c(g, e)= g$ and $g(e)=e.$
 Hence, we have  
 $$
 g(x)= g(\varphi^{-1}(e))= \varphi^{-1}(g(e)) = \varphi^{-1}(e)= x
 $$
 If follows from $(*)$ that $\chi_t (g)=1$ for all $g\in A$   and this is a contradiction.
\end{example}

 \subsection*{Acknowledgments} We thank  Mikael de la Salle and Eric Ricard for helpful  comments.
 We are also grateful to the referee for suggesting us to extend
 our main result, which we originally proved for actions by complete isometries,
 to general actions by isometries.
This work was partially supported by the  ANR (French Agence Nationale de la Recherche)
through the projects Labex Lebesgue (ANR-11-LABX-0020-01) and   GGAA  (ANR-10-BLAN-0116).

\end{document}